\newcommand{\subjclass}[2][1991]{%
  \let\@oldtitle\@title%
  \gdef\@title{\@oldtitle\footnotetext{#1 \emph{Mathematics subject classification.} #2}}%
}
\newcommand{\keywords}[1]{%
  \let\@@oldtitle\@title%
  \gdef\@title{\@@oldtitle\footnotetext{\emph{Key words and phrases.} #1.}}%
}
\newtheorem{thm}{Theorem}[section]
\newtheorem{theorem}[thm]{Theorem}
\newtheorem{lemma}[thm]{Lemma}
\theoremstyle{definition}
\newtheorem{remark}[thm]{Remark}
\newtheorem{definition}[thm]{Definition}
\numberwithin{equation}{section}
\def\cA{\mathcal A}         \def\cI{\mathcal I} \def\cJ{\mathcal J}  \def\cL{\mathcal L}  \def\cN{\mathcal N}   \def\cP{\mathcal P}    \def\cU{\mathcal U}   
\def\fm{\mathfrak{m}} \def\fM{\mathfrak{M}}  \def\fS{\mathfrak{S}}
\def\Z{{\mathbb Z}} \def\R{{\mathbb R}}  \def\N{{\mathbb N}}
\def\PS{Piatetski-Shapiro }
\def\sh{\cA_c} 
\def\sp{\cP_c}
\def\M{{\fM_{\kappa}}}
\def\m{{\fm_{\kappa}}}
\def\eps{{\varepsilon}}
\def\lm{\Lambda} 
\def\Ga{\Gamma}
\def\su#1{\sum_{\substack{#1}}}
\def\bs#1{\begin{equation*} \begin{split} #1 \end{split} \end{equation*}}
\def\bsc#1{\begin{equation} \begin{split} #1 \end{split} \end{equation}}
\def\eqs#1{\begin{equation*} #1 \end{equation*}}
\def\eqn#1{\begin{equation} #1 \end{equation}}
\def\mult#1{
	\begin{multline*}
	#1
	\end{multline*}
}
\def\multn#1{
	\begin{multline}
	#1
	\end{multline}
}
\def\({\left(} \def\){\right)} \def\[{\left[} \def\]{\right]} 
\def\fl#1{\left\lfloor#1\right\rfloor} 
\def\le{\leqslant} \def\ge{\geqslant}
\def\e{\mathbf{e}}
\def\mod{\,\text{mod}\,}
\def \Sc{ S_{c,k} (\alpha,X)} 
\def \Tc{ T_{c,k} (\alpha,X)}
\def \Rc#1{ R_{s,k}^{(c)} ({#1})}
\begin{document}
\title
{Waring-Goldbach Problem with Piatetski-Shapiro Primes}
\subjclass[2010]{Primary 11P32; Secondary  11P05, 11P55, 11L03, 11L07, 11L15, 11L20, 11B83}
\keywords{Waring-Goldbach, Piatetski-Shapiro, Circle Method, Weyl Sums, exponential sums, van der Corput
}
\author{Y\i ld\i r\i m Akbal \\ Ahmet M.~G\"ulo\u glu
\thanks{Both authors were supported by T\"UB\.ITAK Research Grant no. 114F404}
\\Department of Mathematics \\Bilkent University \\06800 Bilkent, Ankara, TURKEY \\{\tt yildirim.akbal@bilkent.edu.tr} \\{\tt guloglua@fen.bilkent.edu.tr} \\
\date{\today}}
\maketitle
\begin{abstract}
In this paper, we exhibit an asymptotic formula for the number of representations of a large integer as a sum of a fixed power of Piatetski-Shapiro primes, thereby establishing a variant of Waring-Goldbach problem with primes from a sparse sequence.
\end{abstract}

\bigskip
\section{Introduction}
We define, for a natural number $k$, and a prime $p$, $\theta=\theta(p,k)$ to be the largest natural number such that $p^\theta \mid k$, and define $\gamma(p,k)$ by
\eqs{
	\gamma = \gamma (p,k) = 
	\bigg\{ 
	\begin{array}{r@{,\quad}l}
		\theta+2  & \text{if $p=2$ and $2\mid k$,} \\
		\theta+1  & \text{otherwise.}
\end{array}  }
We then put $K(k)=\prod_{(p-1) \mid k} p^{\gamma}$. In this work, 
we establish an asymptotic formula for the number of representations of a positive  
integer $\cN$ in the form 
\eqn{\label{representation}
	\cN = p_1^k+ \cdots + p_s^k, \qquad \text{with } p_1, \ldots, p_s \in \sp}
for $k \ge 3$, provided that $\cN$ is congruent to $s$ modulo $K(k)$, and $c>1$ takes values in a small interval depending on $s$ and $k$. Here, the set of primes
\eqs{
	\sp = \{\fl{m^c} : \fl{m^c} \text{ is prime for some } m \in \N \} 
}
is named after \emph{I.I.~Piatetski-Shapiro}, since he was the first to prove an analog of the Prime Number Theorem (cf.~\cite{Shapiro}) for $c \in (1,12/11)$.

\begin{theorem} \label{thm1}
	Let $t>0$ be any integer such that the inequality 
	\eqn{\label{2t_momentofT1}
		\int_0^1 \bigg| \sum_{1 \le n \le X} e^{2\pi i \alpha n^k} \bigg|^{2t} d \alpha < C X^{2t-k} \log^{\eta} X}
	holds for all $X>2$ with some constants $C = C(k,t)$ and $\eta=\eta(t,k) \ge 0$. Then, for any integer $s> 2t$, the number of representations $\Rc \cN$ of a positive integer $\cN$ as in \eqref{representation} satisfies 
	\eqs{\Rc \cN = \frac{\Gamma \( 1+ 1/(ck) \)^s}{\Gamma \( s/(ck) \)}\, \fS (\cN) \,\frac{\cN^{s/(ck) -1}}{\log^s \cN} + o \Bigl( \frac{\cN^{s/(ck) -1}}{\log^s \cN}  \Bigr) 
	}
	where $\fS (\cN)$ defined in \eqref{defs} is the singular series in the classical Waring-Goldbach problem, provided that $c$ is a fixed number satisfying 
	\eqn{\label{rangeofc}
		1< c <  1 + (s-2t)
		\left\{  
		\def\arraystretch{2.2}
		\begin{array}{ll}
			3 \min\Big\{\dfrac{1}{77s + 158 t}, \dfrac{1}{75s + 164t}\Big\}, & k =3, \\
			\dfrac{1}{(\nu-1)s + 2t\nu}, & k > 3,
		\end{array}
		\right.
	}
	where 
	\eqn{\label{nu}
		\nu  = \Bigg\{ 
		\def\arraystretch{1.6}
		\begin{array}{l@{,\quad\text{if }}l}
			k(k+1)^2 & 4 \le k  \le 11, \\
			\dfrac{2\fl{3k/2} (\fl{3k/2}^2-1)}{\fl{3k/2}-k} & k \ge 12.
		\end{array} 
	}
\end{theorem}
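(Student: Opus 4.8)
The plan is to run the Hardy--Littlewood circle method with a generating function adapted to \PS primes. Writing $e(x)=e^{2\pi i x}$ and $X=\cN^{1/k}$, set
\[
 f(\alpha)=\su{p\in\sp\\ p^k\le\cN}(\log p)\,e(\alpha p^k),
\]
so that by orthogonality $\int_0^1 f(\alpha)^s e(-\alpha\cN)\,d\alpha$ is the $\log$-weighted count of solutions to \eqref{representation}; the elementary passage to $\Rc\cN$ (standard in the Waring--Goldbach literature) then reduces the theorem to an asymptotic evaluation of this integral. The first step is to replace the condition $p\in\sp$ by an analytic detector: since $1<c<2$, for large $n$ one has $\mathbf 1[\,n=\fl{m^c}\text{ for some }m\in\N\,]=\fl{-n^{1/c}}-\fl{-(n+1)^{1/c}}$, and expanding $\fl{x}=x-\{x\}$ through the sawtooth $\psi(x)=\tfrac12-\{x\}$ gives $f=f_0+f_1+O(\sqrt X\log X)$ with
\[
 f_0(\alpha)=\sum_{n\le X}\Lambda(n)\bigl((n+1)^{1/c}-n^{1/c}\bigr)e(\alpha n^k),\qquad
 f_1(\alpha)=\sum_{n\le X}\Lambda(n)\bigl(\psi(-n^{1/c})-\psi(-(n+1)^{1/c})\bigr)e(\alpha n^k).
\]
By Taylor's theorem $f_0(\alpha)=\tfrac1c\sum_{n\le X}\Lambda(n)n^{1/c-1}e(\alpha n^k)+O(X^{1/c-1}\log X)$, so $f_0$ is a smoothly weighted Weyl sum over primes carrying the \PS density, while $f_1$ carries only the oscillatory sawtooth error and must contribute nothing to the main term.

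On the major arcs $\fM$ --- intervals about $a/q$ with $q\le(\log\cN)^B$ --- I feed $f_0$ into the classical apparatus: prime equidistribution in residue classes, partial summation over the smooth weight $n^{1/c-1}$, and the elementary Fourier-integral evaluation of the singular integral. The smooth weight leaves the arithmetic untouched, so the complete exponential sums reassemble into the classical Waring--Goldbach singular series $\fS(\cN)$ of \eqref{defs}, positive since $\cN\equiv s\pmod{K(k)}$, while the archimedean factor collapses --- via $\Gamma(1+x)=x\Gamma(x)$ applied with $x=1/(ck)$ --- to $\Gamma(1+1/(ck))^s/\Gamma(s/(ck))$, giving the claimed main term after the reduction. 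It remains to show that $f_1$ on $\fM$ and the entire integrand on the minor arcs $\fm$ contribute $o$ of it.

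The core of the argument, and its main obstacle, lies in two exponential-sum estimates. For the minor arcs I use $\int_{\fm}|f|^s\,d\alpha\le\bigl(\sup_{\fm}|f|\bigr)^{s-2t}\int_0^1|f|^{2t}\,d\alpha$; the $2t$-th moment is supplied by hypothesis \eqref{2t_momentofT1}, since $\Lambda(n)\ll\log X$ transfers it to the prime Weyl sum at a cost of $(\log X)^{O(1)}$, and one more partial summation against $n^{1/c-1}$ then gives $\int_0^1|f_0|^{2t}\,d\alpha\ll\cN^{2t/(ck)-1}(\log\cN)^{O(1)}$. The genuinely hard inputs are (i) a power saving for $f_0$ on $\fm$ and (ii) control of $f_1$ both pointwise and in $L^{2t}$. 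For (ii) one invokes Vaaler's approximation, replacing $\psi(-n^{1/c})-\psi(-(n+1)^{1/c})$ by a short Fourier polynomial $\sum_{1\le|h|\le H}c_h\bigl(e(-h n^{1/c})-e(-h(n+1)^{1/c})\bigr)$ with $|c_h|\ll|h|^{-1}$, plus a controlled remainder; this reduces everything to von Mangoldt--twisted sums $\sum_{n\le X}\Lambda(n)e(\alpha n^k\pm h n^{1/c})$. To these I apply Vaughan's identity --- producing Type I sums $\sum_d a_d\sum_m e(\Phi(dm))$ and Type II bilinear sums $\sum_{d\sim D}\sum_{m\sim M}a_d b_m e(\Phi(dm))$ with $\Phi(n)=\alpha n^k\pm h n^{1/c}$ --- and then estimate the resulting smooth exponential sums by Weyl differencing in the polynomial part $\alpha n^k$ combined with van der Corput's $A$- and $B$-processes on the slowly varying phase $h n^{1/c}$; specialising to $h=0$ yields (i).

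The quantitative strength of these estimates is exactly what dictates the admissible range \eqref{rangeofc}: the cancellation extracted from the Weyl/van der Corput bounds, uniformly over $|h|\le H$ and over $\alpha$, must beat the losses coming from the \PS density $X^{1/c-1}$ and from the $2t$-th moment, and after balancing the Type I and Type II ranges against the length of the $A$--$B$ iteration this forces $1/c$ above a threshold linear in the number $s-2t$ of spare variables. Here $\nu$ of \eqref{nu} records the strongest available van der Corput estimate for a degree-$k$ phase --- $\nu=k(k+1)^2$ from straight Weyl differencing when $4\le k\le 11$, the $\fl{3k/2}$-expression from a more efficient process when $k\ge 12$ --- and for $k=3$ the quantities $77s+158t$ and $75s+164t$ come from two competing ways of organising the differencing, whence the minimum. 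The hypothesis $s>2t$ is used both to leave at least one spare variable for the H\"older step on $\fm$ and to secure the absolute convergence of the singular series and integral. Combining the major-arc main term with the minor-arc and sawtooth bounds, and undoing the reduction, completes the proof.
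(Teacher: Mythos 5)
Your overall architecture --- circle method for the Piatetski--Shapiro prime Weyl sum, detection of $\cP_c$ through the sawtooth $\psi$, Vaaler's approximation plus Vaughan's identity leading to Type I/II sums with phase $\alpha n^k\pm hn^{1/c}$, and H\"older on the minor arcs against the $2t$-th moment hypothesis \eqref{2t_momentofT1} --- is the same as the paper's. But two steps, as you describe them, do not go through. First, your item (i), ``a power saving for $f_0$ on $\fm$,'' is false and the mechanism you propose for it cannot supply even what is actually needed. Your major arcs stop at denominators $q\le(\log\cN)^{B}$, so $\fm$ contains rationals $a/q$ with $q$ just above $(\log\cN)^{B}$, where $f_0(a/q)$ has size $X^{1/c}$ times at best a power of $\log\cN$; no power saving exists there. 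What the H\"older step requires is a saving of a sufficiently large (depending on $s,t,\eta$) power of $\log$, \emph{uniformly down to} $q\asymp(\log\cN)^{B}$, in order to beat the positive log powers produced by the moment bound and land below $X^{s/c-k}(\log X)^{-s}$. Vaughan plus Weyl differencing ``specialised to $h=0$'' cannot give this: those estimates carry inherent $X^{\eps}$ (or fixed log-power) losses and are vacuous when $q$ is only a power of $\log$. The missing ingredient is a Vinogradov-method estimate for $\sum_{p\le X}e(ap^k/q+\gamma p)$ saving an arbitrary power of $\log$ as soon as $q$ exceeds a large power of $\log$ (the paper invokes Theorem 10 of \cite{Hua} with $\kappa\ge 2^{6k}(2+\lambda)$, after partial summation to remove the weight $p^{1/c-1}$); without such an input your minor-arc bound for the smooth part does not close.

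Second, the quantitative range \eqref{rangeofc}--\eqref{nu} is part of the statement, and it is not reachable by ``Weyl differencing in the polynomial part combined with van der Corput's $A$- and $B$-processes.'' In the paper, $\nu=k(k+1)^2$ for $4\le k\le 11$ comes from Heath-Brown's Vinogradov-mean-value-based $k$-th derivative estimate (Lemma \ref{HB}), and the $\fl{3k/2}$ expression for $k\ge 12$ from Bourgain's Weyl-type bound (Lemma \ref{Weyl}) fed through the shifting device of Lemma \ref{ShiftLemma}, where $(n+m)^{1/c}$ is Taylor-expanded to degree $\ell\ge k+1$ so that the differenced phase becomes a genuine polynomial of degree $\ell$ plus a controllable perturbation; the $k=3$ exponents $77s+158t$ and $75s+164t$ arise from a third-derivative van der Corput estimate (Lemma \ref{corput}) followed by a two-parameter optimisation in $u$ and $H_N$. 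Classical Weyl/van der Corput savings of strength $2^{-k}$ or $1/(2^{q}-2)$ would only yield the theorem with a strictly narrower interval of $c$, so even granting your sketch the result it proves is weaker than the one stated. (A small additional point: convergence and truncation of the singular series in the major-arc analysis uses $s\ge 5$ via $S_m(q)\ll q^{1-s/2+\eps}$, not merely $s>2t$ as you suggest.)
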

By \cite[Lemmas 8.10 and 8.12]{Hua}, when $\cN \equiv s \pmod{K(k)}$, the singular series satisfies $\fS(\cN) \asymp 1$ for the values of $s$ given in Theorem \ref{thm1}. Thus, our theorem implies that all sufficiently large integers $\cN$ congruent to $s$ modulo $K(k)$ can be written as in \eqref{representation}, thereby establishing a variant of Waring-Goldbach problem with \PS primes for $k\ge3$.  For $k=2$, it is shown in \cite{ZhaZai} that every sufficiently large integer $N \equiv 5 \pmod{24}$ can be written as in \eqref{representation} with $s=5$, provided that $1 < c < \tfrac{256}{249}$, while for $k=1$,  it follows from \cite{Kumchev} that every sufficiently large \emph{odd} integer can be written as in \eqref{representation} with $s=3$, provided that $1 < c < \tfrac{53}{50}$. 

Following the proof of the main theorem of \cite{ZhaZai}, the current range of $c$ in Theorem \ref{thm1} for the case $k=3$ can be improved. We shall leave this to a subsequent paper.

In analogy to Waring-Goldbach Problem, one can define $H_c (k)$ to be the least integer $s$ such that every sufficiently large integer congruent to $s$ modulo $K(k)$ can be expressed as in \eqref{representation}. Following the proof of Theorem 1 
and using the methods in Hua's book \cite[\S 9]{Hua}, one may conclude that, for large $k$, $H_c (k)$ is bounded above by $4k\log k (1+o(1)) $, when $c$ lies in a slightly larger range than that of Theorem \ref{thm1}. However, coupling our results with the recent improvements of Wooley and Kumchev \cite{KumWoo1,KumWoo2} on Waring-Goldbach problem, we intend to futher improve this bound in an another paper.  	

The range of $c$ in Theorem \ref{thm1} is determined by three different estimates for exponential sums; van der Corput's estimate in Lemma \ref{corput} for $k=3$, Heath Brown's new estimate in Lemma \ref{HB} for $3 < k < 12$, and finally our estimate in Lemma \ref{ShiftLemma} for $k \ge 12$.

\begin{remark} 
	Using Wooley's result \cite[Theorem 4.1]{Wooley} in the light of recent developments on Vinogradov's Mean Value Theorem by Bourgain, Demeter and Guth in \cite[Theorem 1.1]{BoDeGu}, it follows that the smallest exponent satisfying \eqref{2t_momentofT1} is
	\eqs{
		\renewcommand{\arraystretch}{1.2}
		\begin{tabular}{|*{11}{c|}}
			\hline
			$\mathbf{k}$ & 3 & 4 & 5 & 6 &7 &8 &9 &10&11 &12 \\ \hline
			$\mathbf{2t}$ &8&16&24&34&48&62&78&98&118&142 \\ \hline
	\end{tabular}}
	while for $k > 12$, it follows from \cite[Theorem 11]{Bourgain} that $2t$ can be chosen as the smallest even integer no smaller than
	\eqs{k^2 + 1 - \max_{s\le k} \Big \lceil s \frac{k-s-1}{k-s+1}\Big \rceil,}
	and for large $k$, $2t$ can be taken as large as $k^2 - k + O(\sqrt k)$.
\end{remark}

\section{Preliminaries and Notation} 
\subsection{Notation }
Throughout the paper, $k$, $m$ and $n$ are natural numbers with $k\ge 3$, and $p$ always denotes a prime number. We write $n \sim N$ to mean that $N < n \le 2N$. Furthermore, $c>1$ is a fixed real number and we put $\delta = 1/c$. 

Given a real number $x$, we write $e(x)=e^{2\pi ix}$, $\{x\}$ for the fractional part of $x$, $\fl x$ for the greatest integer not exceeding $x$. We write $\cL = \log \cN^{1/k}$.

For any function $f$, we put 
\eqs{\Delta f (x) = f \( -(x+1)^\delta \) - f ( - x^\delta), \qquad (x>0).}

We recall that for functions $F$ and real nonnegative $G$ the notations $F\ll G$ and $F=O(G)$ are equivalent to the statement that the
inequality $|F|\le \alpha G$ holds for some constant $\alpha>0$.  If $F\ge 0$ also, then $F\gg G$ is equivalent to $G\ll F$.  We also write $F\asymp G$ to indicate that $F\ll G$ and $G\ll F$. In what follows, any implied constants in the symbols $\ll$ and $O$ may depend on the parameters $c, \eps, k, s, t$, but are absolute otherwise. We shall frequently use $\eps$ 
with a slight abuse of notation  to mean a small positive number, possibly a different one each time. 


Finally we put
\eqs{
	\Sc =\su{p \le X\\ p \in \cP_c} e(\alpha p^k), \qquad \Tc = \sum_{p \le X}\delta p^{\delta-1} e(\alpha p^k).}

\subsection{Preliminaries}
\subsubsection{Results related to PS sequences}
The characteristic function of the set $\sh = \{ \fl{m^c} : m \in \N \}$ is given by 
\eqn{\label{PScharacterization}
	\fl{- n^\delta } - \fl{ - (n+1)^\delta } = \left\{ \begin{array}{l@{,\quad}l}
		1 & \text{if } n \in \sh, \\[1mm]
		0 & \text{otherwise.}
	\end{array}
	\right.
}
Putting $\psi (x) = x - \fl x - 1/2$ we obtain 
\bsc{ \label{PSapproximation}\fl{- n^\delta } - \fl{ - (n+1)^\delta } &= (n+1)^\delta - n^\delta + \Delta \psi (n) \\
	&= \delta n^{\delta-1} + O (n^{\delta-2}) + \Delta \psi (n). }
The following result due to Vaaler gives an approximation to $\psi (x)$.
\begin{lemma}[{\cite[Appendix]{GraKol}}] \label{Vaaler}
	There exists a trigonometric polynomial 
	\eqs{\psi^\ast (x) = \sum_{1 \le \mid h \mid \le H} a_h e(hx ), \qquad \qquad (a_h \ll |h|^{-1})}
	such that for any real $x$,
	\eqs{| \psi(x) - \psi^\ast (x) | \le \sum_{|h|<H} b_h e(hx), \qquad\qquad (b_h \ll H^{-1}).}
\end{lemma}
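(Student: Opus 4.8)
The argument is Vaaler's, powered by the Beurling--Selberg theory of one-sided $L^1$-extremal approximation; I would organise it as follows. First recall Beurling's entire function $B(z)$ of exponential type $2\pi$ that majorises $\operatorname{sgn}(x)$ on $\R$, has $B(x)-\operatorname{sgn}(x)=O(x^{-2})$, and achieves the minimal excess $\int_{\R}\bigl(B(x)-\operatorname{sgn}(x)\bigr)\,dx=1$; its reflection $-B(-x)$ is the corresponding extremal minorant. Selberg's interval device turns $B$ into entire functions majorising and minorising the indicator of a unit interval, of exponential type $\le 2\pi(H+1)$ and excess $\asymp 1/H$; and because the periodic sawtooth $\psi$ is, up to a linear term and an integration, built from such indicators, the same construction produces entire functions $\psi^{+},\psi^{-}$ of exponential type $\le 2\pi(H+1)$ (with Fourier transform vanishing at $\pm(H+1)$) such that $\psi^{-}(x)\le\psi(x)\le\psi^{+}(x)$ for \emph{every} real $x$ and $\int_{0}^{1}(\psi^{+}-\psi)=\int_{0}^{1}(\psi-\psi^{-})=\tfrac1{2(H+1)}$.

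Next I would transfer this to the circle by periodising. Set $\Phi^{\pm}(x)=\sum_{m\in\Z}\psi^{\pm}(x+m)$, which converges absolutely thanks to the $O(x^{-2})$ decay, and put $\psi^{\ast}=\tfrac12(\Phi^{+}+\Phi^{-})$. By Poisson summation $\widehat{\Phi^{\pm}}(h)=\widehat{\psi^{\pm}}(h)$ vanishes for $|h|\ge H+1$, so $\psi^{\ast}=\sum_{1\le|h|\le H}a_h e(hx)$ is a trigonometric polynomial of degree $\le H$, and since $\psi$ is odd (off $\Z$) with $\int_0^1\psi=0$ the same holds for $\psi^{\ast}$, so there is no constant term. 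The coefficient $a_h$ is the average of $\widehat{\psi^{+}}(h)$ and $\widehat{\psi^{-}}(h)$, which differ from $\widehat{\psi}(h)=\tfrac{-1}{2\pi i h}$ by $O(1/H)$ after the $H$-scaling, whence $a_h=\tfrac{-1}{2\pi i h}+O(1/H)\ll|h|^{-1}$. For the error bound I would instead periodise the \emph{nonnegative} function $\tfrac12(\psi^{+}-\psi^{-})$: periodisation preserves both its nonnegativity and the sandwiching, so $|\psi(x)-\psi^{\ast}(x)|\le\tfrac12\bigl(\Phi^{+}(x)-\Phi^{-}(x)\bigr)=:\sum_{|h|\le H}b_h e(hx)$, a nonnegative trigonometric polynomial whose constant term is $b_0=\tfrac12\int_0^1(\Phi^{+}-\Phi^{-})=\tfrac1{2(H+1)}$; nonnegativity forces $|b_h|\le b_0$ for all $h$, so $b_h\ll H^{-1}$, and relabelling $H$ by $H-1$ if one wants the index range $|h|<H$ exactly as stated costs nothing. (Carried out explicitly this is Vaaler's formula: $\psi^{\ast}$ with cotangent-type coefficients and the error kernel a scalar multiple of the Fej\'er kernel $\sum_{|h|\le H}(1-\tfrac{|h|}{H+1})e(hx)$.)

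The step I expect to be the real obstacle is the input on which the rest rests: showing that the Beurling--Selberg functions genuinely sandwich $\psi$ \emph{pointwise on all of $\R$}---the discontinuity at the integers being the delicate case, where the bound is in fact saturated---while keeping the exponential type at $2\pi(H+1)$ so that the degree count above comes out right. This is precisely the part that hinges on the fine interpolation data of $B$ (the values of $B$ and $B'$ at the integers and the exact excess $\int(B-\operatorname{sgn})=1$), i.e.\ on the heart of Beurling's and Vaaler's analysis; granting it, the absolute convergence of the periodising sums and the Poisson-summation bookkeeping are routine. Should that pointwise majorisation be awkward to set up, a fallback that stays on the torus is to define $\psi^{\ast}$ directly as a damping $\sum_{1\le|h|\le H}\widehat{\psi}(h)\,m\!\bigl(\tfrac{h}{H+1}\bigr)e(hx)$ of the Fourier series of $\psi$ with a suitable multiplier $m$, and to extract $|\psi-\psi^{\ast}|\le\tfrac1{2(H+1)}\sum_{|h|\le H}(1-\tfrac{|h|}{H+1})e(hx)$ from the sawtooth distribution relation $\sum_{j=0}^{H}\psi\!\bigl(x+\tfrac{j}{H+1}\bigr)=\psi\bigl((H+1)x\bigr)$ together with the nonnegativity of the Fej\'er kernel; this replaces the extremal-function theory by a more intricate but self-contained kernel computation.
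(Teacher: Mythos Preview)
The paper does not supply a proof of this lemma at all: it is stated with a citation to the Appendix of Graham--Kolesnik and used as a black box. So there is no ``paper's own proof'' to compare against; your outline is in fact a sketch of exactly the Vaaler construction that the cited reference records. The Beurling--Selberg extremal-function route you describe (entire one-sided approximants of exponential type, periodised via Poisson summation, with the nonnegative Fej\'er-type kernel controlling the error) is the standard argument and is correct in outline; the coefficient bounds $a_h\ll|h|^{-1}$ and $b_h\ll H^{-1}$ fall out just as you say. Your identification of the delicate step---the pointwise sandwiching of $\psi$ at the integers while keeping the exponential type exactly $2\pi(H+1)$---is also accurate, and is precisely what Vaaler's analysis (or the Graham--Kolesnik appendix) supplies.
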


\subsubsection{Definitions related to Waring-Goldbach Problem}
Put
\bsc{\label{defs}
	S(a,q) & = \su{1 \le x \le q \\ (x,q)=1} e \bigl( ax^k/q \bigr),  \\
	S_m (q) & = \su{1 \le a \le q \\ (a,q)=1} \bigl( \varphi(q)^{-1} S(a,q) \bigr)^s e(-ma/q),  \qquad (s \in \N, m \in \Z) \\
	\fS (m)  &= \sum_{q \ge 1} S_m (q), \\
	\cJ (z) & = \int_2^{N^{1/k}} \frac{\delta x^{\delta-1} e(zx^k)}{\log x} dx,  \quad \cI (z) = \int_0^{N^{1/k}} \delta x^{\delta-1} e(zx^k)  dx, \\
	v(z) & = \varphi (q)^{-1} S(a,q)  \cJ (z).	
}
By \cite[Lemma 8.5]{Hua} the estimate
\eqn{\label{S(aq)Bound} 
	S(a,q) \ll q^{1/2+\eps} }
holds for $\gcd(a,q)=1$. By the substitution $y =z x^k$ and the trivial estimate, it easily follows that
\eqn{\label{I(z)bound}
	\cI(z) \ll \min \( N^{\delta/k} , |z|^{-\delta/k}\).}

\begin{definition}[Major and Minor Arcs]
	For fixed $\kappa >0$, define 
	\eqs{\M (a,q) = \{ \alpha \in \R : | q\alpha - a| \le \cL^\kappa X^{-k} \}.}
	Let $\M$ be the union of all $\M(a,q)$ where $a, q$ are coprime integers such that $1 \le a \le q \le \cL^\kappa$. Note that the sets $\M(a,q)$ are pairwise disjoint and are contained in the unit interval $\cU_\kappa = ( \cL^\kappa X^{-k},1+\cL^\kappa X^{-k}]$. Put $\m = \cU_\kappa \setminus \M$. 
	
\end{definition}

\subsection{Standard Lemmas}

\begin{lemma}[{\cite[Theorem 1]{Heathbasgan}}] \label{HB} 
	Let $k \ge 3$ be an integer, and suppose that $f: [0,N] \to \R$ has continuous derivatives of order up to $k$ on $(0,N)$. Suppose further that  $0 < \lambda_k \le  f^{(k)} (x) \le A \lambda_k$ for $x \in (0,N)$. Then,
	\mult{\sum_{n \le N} e(f(n)) \\ \ll_{A,k,\eps} N^{1+\eps} \( {\lambda_k}^{1/k(k-1)}+ N^{-1/k(k-1)}+ N^{-2/k(k-1)} \lambda_k^{-2/k^2(k-1)} \).
	}
\end{lemma}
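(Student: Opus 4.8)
\textbf{Proof proposal for Lemma \ref{HB}.}

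The plan is to deduce this $k$-th derivative estimate from the case $k=2$ (van der Corput's classical second-derivative estimate) by $k-2$ successive applications of the Weyl--van der Corput differencing inequality, tracking how the relevant derivative bounds propagate under differencing. First I would recall the Weyl--van der Corput inequality in the form: for any $1 \le Q \le N$,
\eqs{
	\Big| \sum_{n \le N} e(f(n)) \Big|^2 \ll \frac{N^2}{Q} + \frac{N}{Q} \sum_{1 \le q \le Q} \Big| \sum_{n \in I_q} e\big( f(n+q) - f(n) \big) \Big|,
}
where $I_q \subseteq (0,N)$ is an interval. The inner sum is an exponential sum with phase $f_q(n) = f(n+q)-f(n)$, whose $j$-th derivative is $f_q^{(j)}(n) = q\, f^{(j+1)}(\xi)$ for some $\xi$, by the mean value theorem; hence if $\lambda_{j+1} \le f^{(j+1)} \le A\lambda_{j+1}$ on $(0,N)$ then $q\lambda_{j+1} \le f_q^{(j)} \le A q \lambda_{j+1}$ on the (slightly shorter) domain of $f_q$. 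Thus each differencing step reduces the order of the controlled derivative by one at the cost of a factor $q$ in the size parameter, while preserving the two-sided bound with the same constant $A$.

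Next I would set up the induction carefully. By hypothesis $f^{(k)}$ is controlled by $\lambda_k$; I do \emph{not} have control on lower derivatives, which is why the differencing must be applied so as to always act on the highest available derivative. After $k-2$ differencing steps with parameters $q_1, \ldots, q_{k-2}$ I reach an iterated difference operator whose phase has a second derivative of size $\asymp q_1 \cdots q_{k-2} \lambda_k$ (with constant still $A$, up to harmless powers absorbed into $N^\eps$), on an interval of length $\ll N$. To this I apply the $k=2$ estimate
\eqs{
	\sum_{n \in I} e(g(n)) \ll |I| \mu^{1/2} + \mu^{-1/2}, \qquad 0 < \mu \le g'' \le A\mu,
}
with $\mu \asymp q_1\cdots q_{k-2}\lambda_k$. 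Substituting back and summing the geometric-type sums over $q_1, \ldots, q_{k-2}$ up to their cutoffs $Q_1, \ldots, Q_{k-2}$ — iterating the recursion $|S_j|^2 \ll N^2/Q_j + (N/Q_j)\sum_{q_j \le Q_j} |S_{j-1}|$ — produces, after taking the $2^{-(k-2)}$-th power, a bound of the shape
\eqs{
	\sum_{n\le N} e(f(n)) \ll N^{1+\eps}\Big( \frac{1}{Q_1^{1/2^{k-2}}} + \cdots + (Q_1\cdots Q_{k-2}\lambda_k)^{1/2^{k-1}} N^{-1} \big(\text{lower-order tail}\big) + (Q_1\cdots Q_{k-2}\lambda_k)^{1/2^{k-1}} \Big),
}
modulo bookkeeping of the error terms from each step. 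The standard choice is to take all $Q_j$ equal to a single parameter $Q$ and optimize; balancing $Q^{-1/2^{k-2}}$ against $(Q^{k-2}\lambda_k)^{1/2^{k-1}}$ gives $Q \asymp \lambda_k^{-1/k}$ (after simplifying the exponents $2^{k-2}$, $2^{k-1}$, $k-2$), which produces the main term $N^{1+\eps}\lambda_k^{1/k(k-1)}$; the constraint $1 \le Q \le N$ together with the accumulated error terms from the $N^2/Q$ pieces at each level accounts for the two additional terms $N^{-1/k(k-1)}$ and $N^{-2/k(k-1)}\lambda_k^{-2/k^2(k-1)}$. I would remark that this is precisely the content of \cite[Theorem 1]{Heathbasgan} and quote it, since the optimization bookkeeping, while routine, is lengthy.

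The main obstacle is the honest tracking of the error terms through the $k-2$ levels of recursion: at each application of Weyl--van der Corput one picks up a term $N^2/Q_j$ inside an absolute value raised to a power $1/2$, and after $k-2$ nested iterations these combine into the two lower-order terms in the final bound in a way that requires care with the exponents (the denominators $k(k-1)$, $k^2(k-1)$ arise exactly from $2^{k-1}/(2^{k-2}-\text{something})$-type arithmetic and the choice $Q \asymp \lambda_k^{-1/k}$). A secondary subtlety is that the intervals shrink by $q_j$ at each step and one must ensure they stay non-degenerate, i.e. restrict to $q_j \le Q_j$ with $Q_j$ chosen below the optimal $\lambda_k^{-1/k}$, which is automatic in the relevant range. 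Since the statement we need is quoted verbatim from the literature, I would present the above as a proof sketch and defer the complete computation to \cite{Heathbasgan}, which is the natural and honest thing to do here.
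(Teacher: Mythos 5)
The paper does not prove this lemma at all; it is quoted verbatim from Heath-Brown \cite{Heathbasgan}, so deferring to the citation is indeed the intended route. The problem is that the proof sketch you offer in support of the citation is not a proof of this theorem and cannot be repaired: iterating the Weyl--van der Corput inequality $k-2$ times down to the second-derivative test is exactly the classical van der Corput $k$-th derivative method, and it produces exponents whose denominators grow \emph{exponentially} in $k$, not like $k(k-1)$. This is visible already in your own balancing step: equating $Q^{-1/2^{k-2}}$ with $(Q^{k-2}\lambda_k)^{1/2^{k-1}}$ does give $Q \asymp \lambda_k^{-1/k}$, but the resulting main term is then $\lambda_k^{1/(k\,2^{k-2})}$, not $\lambda_k^{1/k(k-1)}$; the claim that the exponents "simplify" to $1/k(k-1)$ is false for every $k \ge 4$ (compare Lemma \ref{corput} in this paper, which is precisely the iterated van der Corput bound and carries the exponent $1/(4Q-2)$ with $Q = 2^{q}$). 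So the sketch, if written out honestly, would prove a strictly weaker statement than the lemma.

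The actual content of \cite{Heathbasgan} — signalled by its title — is that one must route the argument through Vinogradov's mean value theorem rather than through repeated differencing. The shape of the real proof is: split $[0,N]$ into intervals of a suitably chosen short length, on each interval replace $f$ by its degree-$k$ Taylor polynomial (the $k$-th derivative hypothesis controls both the leading coefficient and the error), apply to the resulting polynomial exponential sum a Weyl-type bound with exponent $1/k(k-1)$ coming from the (Bourgain--Demeter--Guth) mean value theorem — this is the same mechanism as Lemma \ref{Weyl} quoted in this paper — and then optimize over the interval length and sum over the intervals; the three terms in the lemma arise from this optimization. If you want to include a sketch alongside the citation, it should be this one; otherwise simply cite \cite[Theorem 1]{Heathbasgan} without the differencing heuristic, since that heuristic points at the wrong method and at a bound it cannot deliver.
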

\begin{lemma}[{\cite[Theorem 2.8]{GraKol}}] \label{corput}
	Let $q$ be a positive integer. Suppose that $f$ is a real valued function with $q + 2$ continuous derivatives on some interval $I$. Suppose also that for some $\lambda  > 0$
	and for some $\alpha > 1$, 
	\eqs{\lambda \le  | f^{(q+2)} (x) | \le \alpha \lambda}
	on $I$. Let $Q = 2^q$. Then,
	\mult{\sum_{n \in I} e(f(n)) \ll \\
		|I| (\alpha^2 \lambda)^{1/(4Q-2)} + |I|^{1-1/(2Q)}  \alpha^{1/(2Q)} + |I|^{1 - 2/Q + 1/Q^2} \lambda^{-1/(2Q)}     
	}
	where the implied constant is absolute.
\end{lemma}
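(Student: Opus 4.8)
\ This is van der Corput's $q$-th derivative test with the oscillation ratio $\alpha$ tracked explicitly, and the plan is to prove it by induction on $q$ using the Weyl--van der Corput inequality (the ``$A$-process''), with the second derivative test as the base case. Concretely I would first dispose of the $q=0$ version of the displayed bound (with $Q=1$): this is the classical second derivative test $\sum_{n\in I}e(f(n))\ll|I|\alpha\lambda^{1/2}+\lambda^{-1/2}$, valid whenever $\lambda\le|f''|\le\alpha\lambda$ on $I$ (see \cite{GraKol}), together with the elementary fact that the extra term $|I|^{1/2}\alpha^{1/2}$ is in every case at most $\max\bigl(|I|\alpha\lambda^{1/2},\lambda^{-1/2}\bigr)$ and so may be inserted harmlessly. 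I would recall, if helpful, that the second derivative test is obtained by the ``$B$-process'': Poisson summation rewrites the sum as $\sum_m\int_I e(f(x)-mx)\,dx$ up to a negligible tail, only $\ll 1+\alpha\lambda|I|$ integers $m$ lie in the range where $f'(x)-m$ can vanish and each corresponding exponential integral is $\ll\lambda^{-1/2}$ by van der Corput's lemma for exponential integrals, while the remaining $m$ contribute $\ll\lambda^{-1/2}$ after one integration by parts.

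For the inductive step I would suppose the bound holds at level $q\ge0$ with $Q=2^q$, take $f$ with $q+3$ continuous derivatives and $\lambda\le|f^{(q+3)}|\le\alpha\lambda$ on $I$, and apply the Weyl--van der Corput inequality: for every integer $R$ with $1\le R\le|I|$,
\eqs{\Big|\sum_{n\in I}e(f(n))\Big|^2\ll\frac{|I|^2}{R}+\frac{|I|}{R}\sum_{1\le r<R}\Big|\sum_{n,\,n+r\in I}e\bigl(f(n+r)-f(n)\bigr)\Big|.}
For each fixed $r$ the differenced function $g_r(x)=f(x+r)-f(x)$ has $g_r^{(q+2)}(x)=f^{(q+2)}(x+r)-f^{(q+2)}(x)=r\,f^{(q+3)}(\xi)$ for some $\xi\in(x,x+r)$ by the mean value theorem, hence $r\lambda\le|g_r^{(q+2)}(x)|\le r\alpha\lambda$ on $I$: the \emph{ratio} of these two bounds is still $\alpha$, which is exactly why the induction preserves uniformity in $\alpha$. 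Applying the level-$q$ bound to $g_r$ with $\lambda$ replaced by $r\lambda$, summing the three resulting terms over $1\le r<R$ by means of $\sum_{r<R}r^{\beta}\asymp R^{1+\beta}$ $(\beta>-1)$, and substituting back, one reaches a bound of the form
\eqs{\Big|\sum_{n\in I}e(f(n))\Big|^2\ll\frac{|I|^2}{R}+|I|^2(\alpha^2\lambda)^{\frac1{4Q-2}}R^{\frac1{4Q-2}}+|I|^{2-\frac1{2Q}}\alpha^{\frac1{2Q}}+|I|^{2-\frac2Q+\frac1{Q^2}}(R\lambda)^{-\frac1{2Q}}.}

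It remains to choose $R\in[1,|I|]$ and take square roots so as to recover the asserted bound at level $q+1$, i.e.\ with $Q$ replaced by $Q'=2Q=2^{q+1}$: balancing $|I|^2/R$ against the second term gives $R\asymp(\alpha^2\lambda)^{-1/(4Q-1)}$ and yields the first term $|I|(\alpha^2\lambda)^{1/(4Q'-2)}$; the $R$-independent third term already furnishes the second term $|I|^{1-1/(2Q')}\alpha^{1/(2Q')}$; and the third term $|I|^{1-2/Q'+1/(Q')^2}\lambda^{-1/(2Q')}$ comes out of the endpoint behaviour $R=|I|$ of this optimization. This final optimization is the step I expect to be the main obstacle --- purely computational, but genuinely delicate: the error term inherited from the induction hypothesis pushes $R$ to be large while the differencing term pushes it to be small, so a single crude choice of $R$ need not control all four terms at once. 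One resolves this by splitting into regimes according to the sizes of $\alpha^2\lambda$ and $\lambda$ relative to powers of $|I|$ --- in particular invoking the trivial bound $|\sum_{n\in I}e(f(n))|\le|I|$, and correspondingly $\sum_{n}e(g_r(n))\ll|I|$ for those $r$ for which the inherited bound exceeds $|I|$ --- and checking regime by regime that the square of the claimed bound dominates. This is intricate but routine bookkeeping within van der Corput's method and introduces nothing beyond the $A$- and $B$-processes already in play.
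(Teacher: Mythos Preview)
The paper does not prove this lemma at all: it is quoted as \cite[Theorem 2.8]{GraKol} and used as a black box, so there is no in-paper argument to compare against. Your outline --- induction on $q$ via the Weyl--van der Corput $A$-process, with the second derivative test (from the $B$-process) as base case, and a regime-by-regime optimization in $R$ --- is precisely the proof given in Graham--Kolesnik for this theorem, so you have correctly reconstructed the argument the paper is citing.
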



\begin{lemma} \label{enlargeint}
	Assume $I_1$ is a subinterval of an interval $I$ with $|I_1| > 1$, and $g(x)$ is defined on $I$. Then,  
	\eqs{\sum_{n \in I_1} e\( g(n)\) \ll \log (1+|I|) \sup_{\gamma \in [0,1]} \bigg| \sum_{n \in I} e\(g(n) + \gamma n\)  \bigg|.} 
\end{lemma}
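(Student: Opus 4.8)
The plan is to realize the restriction ``$n\in I_1$'' as a Fourier average against a suitable kernel and then pull the resulting integral outside the exponential sum. First I would write $I_1 = (a,b] \cap \Z$ with $b-a = |I_1| > 1$, and recall that the indicator of the residues $n$ lying in $(a,b]$ modulo a large modulus can be detected by Fourier analysis on a circle; concretely, after rescaling so that $I$ has length at most some $L \asymp |I|$, the indicator $\mathbf 1_{I_1}(n)$ for $n\in I$ can be expanded as $\int_0^1 \big(\sum_{|h|\le H} \widehat{\mathbf 1}_h\, e(h n/L)\big)\,\cdots$; more cleanly, one uses the elementary identity that for integers $n$,
\eqs{\mathbf 1_{(a,b]}(n) = \int_0^1 K(\gamma)\, e(\gamma n)\, d\gamma}
is not exactly available, so instead I would use the standard device: $\mathbf 1_{I_1}(n) = \int_{0}^{1} \Big(\sum_{a < m \le b} e(-\gamma m)\Big) e(\gamma n)\, d\gamma$ when $n$ ranges over integers in a fixed interval and $a,b$ are chosen to be integers or half-integers (this is just orthogonality of characters on $\Z$, valid because $e(\gamma(n-m))$ integrates to $\mathbf 1_{n=m}$). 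Summing over $m$ in the geometric series gives a Dirichlet-type kernel whose $L^1$ norm over $\gamma \in [0,1]$ is $\ll \log(1+|I|)$.

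With this in hand, the key steps are: (i) substitute the integral representation of $\mathbf 1_{I_1}(n)$ into $\sum_{n\in I_1} e(g(n)) = \sum_{n \in I}\mathbf 1_{I_1}(n)\, e(g(n))$; (ii) interchange the finite sum over $n\in I$ with the integral over $\gamma$, which is legitimate since everything is finite; (iii) bound the result by $\int_0^1 |K(\gamma)|\,\big|\sum_{n\in I} e(g(n)+\gamma n)\big|\,d\gamma$, where $K(\gamma) = \sum_{a<m\le b} e(-\gamma m)$; (iv) pull out the supremum over $\gamma\in[0,1]$ of the inner exponential sum and use the kernel bound $\int_0^1 |K(\gamma)|\,d\gamma \ll \log(1+|I_1|) \le \log(1+|I|)$. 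The logarithmic bound on the kernel is the classical estimate $\int_0^1 \big|\sum_{m=1}^{M} e(-\gamma m)\big|\,d\gamma \ll \log(1+M)$, which follows from $\big|\sum_{m=1}^M e(-\gamma m)\big| \ll \min(M, \|\gamma\|^{-1})$ and splitting the integral at $\gamma \asymp 1/M$.

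The main (mild) obstacle is purely bookkeeping: making sure the endpoints $a,b$ of $I_1$ are chosen so that the orthogonality identity $\int_0^1 e(\gamma(n-m))\,d\gamma = \mathbf 1_{n=m}$ exactly recovers $\mathbf 1_{I_1}(n)$ for all integers $n$ — i.e.\ taking $a,b$ to be half-integers, or equivalently replacing $I_1$ by its integer points and the closure thereof, which changes nothing in the statement. One must also check that the function $g$ need only be defined on $I$ (indeed it is), since the inner sum $\sum_{n\in I} e(g(n)+\gamma n)$ only ever evaluates $g$ on $I$; the shift $\gamma n$ is harmless because $\gamma$ ranges over a bounded set. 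No analytic input beyond the Dirichlet kernel estimate is needed, so once the Fourier identity is set up correctly the proof is immediate.
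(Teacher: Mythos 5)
Your proposal is correct and follows essentially the same route as the paper: the identity $\sum_{n \in I_1} e(g(n)) = \int_0^1 \sum_{n \in I} e(g(n)+\gamma n)\,\sum_{m \in I_1} e(-\gamma m)\, d\gamma$ via orthogonality, followed by the Dirichlet-kernel bound $\int_0^1 \min\{|I_1|, \|\gamma\|^{-1}\}\, d\gamma \ll \log(1+|I|)$ and taking the supremum over $\gamma$. Your worry about endpoint bookkeeping is unnecessary, since summing $m$ over the integers of $I_1$ makes the orthogonality identity exact as it stands.
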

\begin{proof}
	The result follows upon taking the supremum over all $\gamma \in [0,1]$ in
	\eqs{\sum_{n \in I_1} e\( g(n) \) = \int_0^1 \sum_{n \in I} e\( g(n) +\gamma n \)  \sum_{m \in I_1} e(-\gamma m) d\gamma,}
	and using the fundamental inequality
	\eqs{
		\int_{0}^{1} \bigg|	\sum_{m \in I_1} e(-\gamma m) \bigg| d\gamma  \ll \int_{0}^{1} \min \bigg\{|I_1|, \frac{1}{|| \gamma ||} \bigg\} d\gamma \ll \log(1 + |I|)
	}
	where $||\gamma||=\min_{n \in \Z} |n-\gamma|$.
\end{proof}

\begin{lemma}[{\cite[Theorem 5]{Bourgain}}] \label{Weyl}
	Let $k\ge3$ be an integer, and let $\alpha_1, \ldots, \alpha_k \in \R$. Suppose that there exists a natural
	number $j$ with $2 \le j \le k$ such that, for some $a \in \Z$ and $q \in \N$ with $(a,q)=1$, one
	has $|\alpha_j -a/q | \le q^{-2}$
	. Then,
	\eqs{\sum_{1 \le n \le N} e\(\alpha_1 n + \cdots + \alpha_k n^k\)  \ll N^{1+ \eps} \( q^{-1} + N^{-1} + qN^{-j} \)^{1/k(k-1)}.}
\end{lemma}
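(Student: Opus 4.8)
The plan is to deduce the stated bound from the Vinogradov Mean Value Theorem (the Main Conjecture in Vinogradov's problem), now available for all $k\ge3$ as a consequence of the $\ell^2$-decoupling inequality for the moment curve of Bourgain, Demeter and Guth~\cite{BoDeGu}. Writing $f(\boldsymbol\alpha)=\sum_{1\le n\le N}e(\alpha_1 n+\cdots+\alpha_k n^k)$ and, for a positive integer $s$,
\[
J_{s,k}(N)=\int_{[0,1]^k}\bigl|f(\boldsymbol\beta)\bigr|^{2s}\,d\boldsymbol\beta ,
\]
the input is the uniform estimate $J_{s,k}(N)\ll N^{2s-k(k+1)/2+\eps}$, valid for every integer $s\ge k(k+1)/2$ and every $N\ge2$; since $J_{s+1,k}(N)\le N^{2}J_{s,k}(N)$ the bound propagates, with the same shape, to all larger $s$, so we may enlarge $s$ later at no cost. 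First one disposes of the trivial range: if $q=1$ or $q\ge N^{j}$ then $\bigl(q^{-1}+N^{-1}+qN^{-j}\bigr)^{1/k(k-1)}\ge1$, so the assertion is weaker than the trivial bound $|f(\boldsymbol\alpha)|\le N$; hence we may assume $2\le q<N^{j}$.

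The crux is a transference from the mean value to the single sum. Expanding $f(\boldsymbol\alpha)^{s}$ by orthogonality gives $f(\boldsymbol\alpha)^{s}=\sum_{\mathbf m}r(\mathbf m)e(\boldsymbol\alpha\cdot\mathbf m)$, where $r(\mathbf m)$ counts $\mathbf n\in[1,N]^{s}$ with $n_1^{l}+\cdots+n_s^{l}=m_l$ for $1\le l\le k$, so that
\[
|f(\boldsymbol\alpha)|^{2s}=\sum_{\mathbf h}B(\mathbf h)\,e(\boldsymbol\alpha\cdot\mathbf h),\qquad B(\mathbf h)=\sum_{\mathbf m}r(\mathbf m)\,r(\mathbf m+\mathbf h),
\]
with $\mathbf h$ ranging over the box $\prod_{l=1}^{k}[-sN^{l},sN^{l}]$, and $0\le B(\mathbf h)\le B(\mathbf 0)=J_{s,k}(N)$ for every $\mathbf h$ by Cauchy--Schwarz. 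I would then group the $\mathbf h$-sum by the value of $h_j$, bound the factor $e\bigl(\sum_{l\ne j}\alpha_l h_l\bigr)$ trivially, and so reduce to estimating $\sum_{h_j}B(\mathbf h)e(\alpha_j h_j)$ for each fixed choice of $(h_l)_{l\ne j}$. The term $h_j=0$ telescopes to $\int_0^1\bigl|\sum_{n\le N}e(\theta n^{j})\bigr|^{2s}\,d\theta$, which by standard bounds for the $2s$-th moment of a single $j$-th power sum (Hua's lemma, and its refinements via \cite{Bourgain} for small $k$; this may require $s$ a little above $k(k+1)/2$, which by the above is harmless) is $\ll N^{2s-j+\eps}$, comfortably inside the target for $k\ge3$ on the range $2\le q<N^{j}$. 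For $h_j\ne0$ one uses the hypothesis as $\|q\alpha_j-a\|\le q^{-1}$: dissecting $1\le|h_j|\le sN^{j}$ into blocks commensurate with $q$ and combining $B(\mathbf h)\le J_{s,k}(N)$ with completion-type and divisor-function estimates, one saves a factor $\asymp(q+N^{j}/q)^{-1}$ over the trivial count. Summing over the $\ll N^{k(k+1)/2-j}$ tuples $(h_l)_{l\ne j}$, inserting $J_{s,k}(N)\ll N^{2s-k(k+1)/2+\eps}$, optimizing the dissection, and taking $2s$-th roots yields $f(\boldsymbol\alpha)\ll N^{1+\eps}(q^{-1}+N^{-1}+qN^{-j})^{1/k(k-1)}$.

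The step I expect to be the main obstacle is exactly this off-diagonal estimate: one must convert Diophantine information about the \emph{single} coefficient $\alpha_j$ into honest cancellation in $\sum_{\mathbf h}B(\mathbf h)e(\boldsymbol\alpha\cdot\mathbf h)$ while the other coefficients $\alpha_l$ ($l\ne j$) remain entirely unconstrained, and doing this uniformly in those coefficients without degrading the exponent $1/k(k-1)$ is precisely what needs the full Vinogradov Main Conjecture --- elementary Weyl differencing exposes only the leading coefficient $\alpha_k$ and so cannot reach $\alpha_j$ for $j<k$. A workable alternative of comparable difficulty skips the mean-value reformulation and applies the decoupling inequality of \cite{BoDeGu} directly to $f$ after an iterated rescaling adapted to the denominator $q$; this is, in effect, how Weyl-type bounds such as \cite[Theorem 5]{Bourgain} are established.
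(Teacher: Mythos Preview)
The paper does not prove this lemma; it is quoted as \cite[Theorem~5]{Bourgain} and used as a black box, so there is no in-paper argument to compare against beyond the citation.

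Your outline correctly identifies that the input is the (now proven) Main Conjecture in Vinogradov's mean value theorem, but the transference mechanism you propose has a genuine gap. After writing $|f(\boldsymbol\alpha)|^{2s}=\sum_{\mathbf h}B(\mathbf h)\,e(\boldsymbol\alpha\cdot\mathbf h)$ and freezing $(h_l)_{l\ne j}$, you need cancellation in $\sum_{h_j}B(\mathbf h)\,e(\alpha_j h_j)$. The only information you invoke about $B$ is the pointwise bound $0\le B(\mathbf h)\le J_{s,k}(N)$, and a rational approximation to $\alpha_j$ cannot by itself force cancellation in $\sum_{h_j}c(h_j)e(\alpha_j h_j)$ for \emph{arbitrary} nonnegative weights $c(h_j)$: nothing prevents the mass of $c$ from concentrating on a residue class modulo $q$ and destroying the saving. ``Completion-type and divisor-function estimates'' do not help here, since $h_j\mapsto B(\mathbf h)$ carries no multiplicative or analytic structure to complete against. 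Moreover, the saving you quote, $(q+N^{j}/q)^{-1}\asymp\min(q^{-1},qN^{-j})$, is the wrong quantity; the lemma features $q^{-1}+qN^{-j}\asymp\max(q^{-1},qN^{-j})$, and with $2s=k(k+1)$ your route would at best yield exponent $1/k(k+1)$, not $1/k(k-1)$.

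What Bourgain actually does---and what your final paragraph alludes to---is the Weyl shift. One writes $f(\boldsymbol\alpha)=\sum_{n\le N}e\bigl(P(n+y)\bigr)+O(y)$, averages over $1\le y\le Y$, and applies H\"older to obtain $|f(\boldsymbol\alpha)|^{2s}\ll Y^{-1}\sum_{y\le Y}|f(\boldsymbol\gamma(y))|^{2s}+Y^{2s}$ with $\gamma_i(y)=\sum_{l\ge i}\binom{l}{i}\alpha_l\,y^{l-i}$. The new variable $y$ enters $\gamma_{j-1}(y)$ through the term $j\alpha_j y$, so the Diophantine hypothesis on $\alpha_j$ makes the points $\boldsymbol\gamma(y)$ well spaced modulo $1$ in the $(j-1)$-st coordinate; a large-sieve/spacing argument then bounds the $y$-average by a suitable $J_{s,k}(N)$ with $2s=k(k-1)$, producing exactly the exponent $1/k(k-1)$. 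The shift is the missing idea: it manufactures a genuine averaging variable coupled to $\alpha_j$, something your static Fourier expansion in $\mathbf h$ cannot supply.
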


The following result can be deduced from \cite[Prop. 13.4]{Iwaniec}.
\begin{lemma}[Vaughan's Identity]
	\label{VaugID}
	Let $u, v \ge 1$ be real numbers. If $n > v$ then,
	\eqs{\lm (n) = \su{ab = n \\ a \le u} \mu (a) \log b - \su{ab = n \\ a>v, b >u} \lm(a) \su{d \mid b \\d \le u} \mu(d) - \su{abc=n \\ b \le u,  a \le v} \mu(b) \lm (a) .}
\end{lemma}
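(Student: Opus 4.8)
The plan is to run the classical Vaughan argument at the level of Dirichlet series and then read off the coefficients. I work in the half-plane $\Re s>1$, where
$$-\frac{\zeta'}{\zeta}(s)=\sum_{n}\Lambda(n)n^{-s},\quad -\zeta'(s)=\sum_n(\log n)n^{-s},\quad \zeta(s)=\sum_n n^{-s},\quad \frac{1}{\zeta(s)}=\sum_n\mu(n)n^{-s}$$
all converge absolutely, and I introduce the two truncations
$$M(s)=\su{d\le u}\mu(d)\,d^{-s},\qquad F(s)=\su{m\le v}\Lambda(m)\,m^{-s}.$$
First I would record the purely algebraic identity
$$-\frac{\zeta'}{\zeta}=F-\zeta' M-\zeta F M+\Bigl(-\frac{\zeta'}{\zeta}-F\Bigr)(1-\zeta M),$$
which follows at once on expanding the last product as $-\zeta'/\zeta+\zeta' M-F+\zeta F M$ and cancelling it against the three explicit terms. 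Since $M$ and $F$ are \emph{finite} sums, each of the four summands on the right is an absolutely convergent Dirichlet series, so the identity may be compared coefficient by coefficient; it therefore suffices to match the coefficient of $n^{-s}$ on the two sides for an arbitrary fixed integer $n>v$.

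Next I would extract those coefficients one summand at a time. The term $F$ contributes nothing, since its Dirichlet coefficients are supported on $m\le v$ and we have fixed $n>v$ — this is the only place the hypothesis is used. The term $-\zeta' M$ has coefficient $\su{ab=n\\ a\le u}\mu(a)\log b$ at $n$, which is the first sum in the statement. For $-\zeta F M$, note that $\zeta F$ has coefficient $\su{d\mid n\\ d\le v}\Lambda(d)$, so $-\zeta F M$ contributes $-\su{abc=n\\ b\le u\\ a\le v}\mu(b)\Lambda(a)$, the third sum. Finally, $-\zeta'/\zeta-F$ has coefficient $\Lambda(m)$ for $m>v$ and $0$ otherwise, while $1-\zeta M$ has coefficient $0$ at $m=1$ (the two pieces cancelling, as $\mu(1)=1$) and $-\su{d\mid m\\ d\le u}\mu(d)$ at each $m>1$; hence the last product contributes $-\su{ab=n\\ a>v\\ b>1}\Lambda(a)\su{d\mid b\\ d\le u}\mu(d)$.

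It then remains only to tidy this last sum: for every $b$ with $1<b\le u$ one has $\su{d\mid b\\ d\le u}\mu(d)=\su{d\mid b}\mu(d)=0$, so those terms vanish and the constraint $b>1$ may be tightened to $b>u$, producing exactly the second sum in the statement. Adding the three surviving contributions gives the claimed formula. I do not expect any genuine obstacle here: the computation is the standard one, the two points worth a moment's care being the disappearance of the stray $F$-term once $n>v$ and the M\"obius cancellation that sharpens ``$b>1$'' to ``$b>u$'' in the bilinear piece. Alternatively, one can simply specialize \cite[Prop.~13.4]{Iwaniec} and carry out the same bookkeeping.
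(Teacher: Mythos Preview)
Your argument is correct: the Dirichlet-series identity is verified by the expansion you wrote, the coefficient extraction is accurate term by term, and the two delicate points you flag (the vanishing of the $F$-contribution for $n>v$ and the M\"obius cancellation upgrading $b>1$ to $b>u$) are handled properly. Note that the paper does not actually supply a proof of this lemma at all---it merely records that the statement can be deduced from \cite[Prop.~13.4]{Iwaniec}; your write-up is precisely the standard derivation behind that reference, so there is no substantive difference in approach to compare.
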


\begin{lemma} \label{J2I}
	For any nonzero $\beta \in \R$,
	\eqs{ 
		\cJ (\beta) - \cL^{-1} \cI (\beta) \ll \frac{\cN^{\delta/k}}{\cL^{\kappa+2} } + \min\{ \cN^{\delta/k}, |\beta |^{-\delta/k}\}\frac{\log \log \cN}{\cL^2}. 
	}
	
\end{lemma}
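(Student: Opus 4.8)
\emph{Proof plan.} The plan is to express the difference as a single integral in which the weight $1/\log x - 1/\cL$ appears, to exploit that this weight is small throughout the range and in fact vanishes at $x=\cN^{1/k}$, and to split the range of integration so that cancellation from $e(\beta x^k)$ is only needed on the part where the weight is already tiny. Since $\cL=\log\cN^{1/k}$, writing $h(x)=\delta x^{\delta-1}e(\beta x^k)$ and $g(x)=\cL/\log x-1$ we have
\[
\cJ(\beta)-\cL^{-1}\cI(\beta)=\cL^{-1}\int_2^{\cN^{1/k}}h(x)\,g(x)\,dx\;-\;\cL^{-1}\int_0^2 h(x)\,dx .
\]
The last integral is $\ll\int_0^2\delta x^{\delta-1}\,dx\ll1$, so after multiplying by $\cL^{-1}$ it contributes $O(\cL^{-1})$, which for $\cN$ large is $\le\cN^{\delta/k}\cL^{-(\kappa+2)}$ and is absorbed into the first error term. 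It remains to bound $\cL^{-1}\int_2^{\cN^{1/k}}hg$, and here $g$ is positive and strictly decreasing on $(1,\cN^{1/k}]$, with $g(\cN^{1/k})=0$ and $g(2)=\cL/\log2-1\ll\cL$.

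Next I would split the range at $Z=\cN^{1/k}\cL^{-B}$, where $B=\lceil(\kappa+2)c\rceil$ is a fixed integer and $2<Z<\cN^{1/k}$ for $\cN$ large. On the initial segment $[2,Z]$ the trivial bound suffices: since $g$ is decreasing,
\[
\cL^{-1}\Bigl|\int_2^Z hg\Bigr|\;\le\;\cL^{-1}g(2)\int_2^Z\delta x^{\delta-1}\,dx\;\ll\;\cL^{-1}\cdot\cL\cdot Z^\delta\;=\;\cN^{\delta/k}\cL^{-B\delta}\;\le\;\frac{\cN^{\delta/k}}{\cL^{\kappa+2}},
\]
using $\delta=1/c$ and $B\delta=B/c\ge\kappa+2$. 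On the remaining segment the weight is genuinely small: $\log Z=\cL-B\log\cL$, so $g(Z)=\dfrac{B\log\cL}{\cL-B\log\cL}\ll\dfrac{\log\cL}{\cL}\ll\dfrac{\log\log\cN}{\cL}$. Integrating by parts with $H(x)=\int_Z^x h(t)\,dt$, and using $H(Z)=0$ together with $g(\cN^{1/k})=0$ and the monotonicity of $g$,
\[
\int_Z^{\cN^{1/k}}hg=-\int_Z^{\cN^{1/k}}H(x)g'(x)\,dx,\qquad\Bigl|\int_Z^{\cN^{1/k}}hg\Bigr|\le\Bigl(\sup_{[Z,\cN^{1/k}]}|H|\Bigr)\int_Z^{\cN^{1/k}}|g'|=g(Z)\sup_{[Z,\cN^{1/k}]}|H| .
\]
The input I need here is the uniform bound $|H(x)|\ll\min\{\cN^{\delta/k},|\beta|^{-\delta/k}\}$ for $Z\le x\le\cN^{1/k}$; this follows by the argument that establishes \eqref{I(z)bound}: after the substitution $y=t^k$ one writes $\int_Z^x=\int_0^x-\int_0^Z$ and bounds each $\int_0^a y^{\delta/k-1}e(\beta y)\,dy$ by $a^{\delta/k}$ trivially and by $|\beta|^{-\delta/k}$ after splitting at $y=|\beta|^{-1}$ and integrating by parts. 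Hence $\cL^{-1}\bigl|\int_Z^{\cN^{1/k}}hg\bigr|\ll\cL^{-1}g(Z)\min\{\cN^{\delta/k},|\beta|^{-\delta/k}\}\ll\dfrac{\log\log\cN}{\cL^2}\min\{\cN^{\delta/k},|\beta|^{-\delta/k}\}$, and adding the three contributions yields the claim for every nonzero $\beta$.

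The only point that really needs care is the choice of the cut $Z$: it must be close enough to $\cN^{1/k}$ that $\cN^{1/k}/Z$ is merely polylogarithmic, so that $g(Z)$ carries a saving of $\cL^{-1}\log\log\cN$, and yet far enough from $\cN^{1/k}$ that $Z^\delta$ loses the prescribed power $\cL^{-(\kappa+2)}$; taking $Z=\cN^{1/k}\cL^{-B}$ with $B$ a sufficiently large constant depending on $\kappa$ and $c$ reconciles both requirements. Everything else is routine: on $[2,Z]$ no cancellation in $h$ is used because that segment is too short to compete with the first error term, while on $[Z,\cN^{1/k}]$ the sole nontrivial estimate is \eqref{I(z)bound}. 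I do not anticipate any essential obstacle beyond tracking the dependence of $B$ on $\kappa$ and $c$.
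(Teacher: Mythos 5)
Your proof is correct and follows essentially the same route as the paper: split the integral at a point a fixed power of $\cL$ below $\cN^{1/k}$, bound the lower segment trivially, and on the upper segment integrate by parts against the slowly varying weight, using the oscillatory-integral bound $\min\{t^{\delta},|\beta|^{-\delta/k}\}$ exactly as in \eqref{I(z)bound} (your $H$ playing the role of the paper's $\Phi(\beta,t)$). The only differences are cosmetic: you treat the range $[0,2]$ of $\cI$ explicitly and take a slightly larger cut exponent $B$ to compensate for not saving the extra $\log$ in the trivial bound, which does not affect the final estimate.
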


\begin{proof}
	Using trivial estimate 
	\eqs{
		\int_2^J \delta x^{\delta-1} e(\beta x^k) \( (\log x)^{-1} - \cL^{-1} \)dx \ll \frac{J^{\delta/k}}{\log J},
	} 
	for any $2 < J < \cN^{1/k}$. By partial integration
	\eqs{
		\int_J^{N^{1/k}} \delta x^{\delta-1} e(\beta x^k) \( (\log x)^{-1} - \cL^{-1} \) dx \ll \frac{\log \(\cN^{1/k}/J\)}{\cL \log J} \sup_{J < t \le N^{1/k}} |\Phi(\beta,t)| 
	}
	where
	\eqs{
		\Phi(\beta,t) =\int_2^t \delta x^{\delta-1} e(\beta x^k) dx \ll \min\{ t^\delta, |\beta |^{-\delta/k}\} 
	}
	uniformly for $2<t \le N^{1/k}$. Choosing $J = \cN^{1/k} (\log \cN)^{-c(\kappa+1)}$ and combining the above estimates completes the proof.
\end{proof}

\subsection{Exponential sum estimates}

This part constitutes the backbone of the entire paper and is to be used in the proof of Theorem 1.

\begin{lemma} \label{ShiftLemma}
	Assume $k\ge 3$, $D>0$, and $g(x) \in \R[x]$ is a polynomial of degree not exceeding $k$. Let $\ell \ge k+1$ be an integer. Then, the estimate 
	\begin{multline*} 
		\sum_{n \in I} e\(g(n) + D n^\delta\) \\
		\ll N ^{1+\eps} \( (DN^{\delta-k-1})^{\sigma} + (DN^\delta)^{\frac{\sigma}{\ell+1}}N^{-\sigma}  + (DN^\delta)^{-\frac{\ell-k}{\ell+1}\sigma} \)
	\end{multline*}
	holds with $\sigma^{-1} = \ell(\ell-1)$ or with $\sigma^{-1} = 2^k$ whenever $\ell=k+1$, for any subinterval $I$ of $(N,2N]$, where the implied constant depends only on $\eps,k$ and $\ell$.
\end{lemma}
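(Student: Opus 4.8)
The plan is to reduce the mixed sum $\sum_{n\in I} e(g(n)+Dn^\delta)$ to a family of pure Weyl sums by a Weyl-differencing / van der Corput $A$-process, after which Lemma~\ref{Weyl} applies. First I would fix a positive integer parameter $\ell\ge k+1$ and apply the $(\ell-k)$-fold (or however many steps are needed to kill the polynomial part) finite-difference operator to the summand; more precisely, using the standard inequality
\[
\Bigl|\sum_{n\in I} e(F(n))\Bigr|^{2^{r}} \ll N^{2^{r}-r-1}\sum_{|h_1|<N}\cdots\sum_{|h_r|<N}\Bigl|\sum_{n\in I_{\mathbf h}} e\bigl(F_{\mathbf h}(n)\bigr)\Bigr|,
\]
where $F=g+D(\cdot)^\delta$ and $F_{\mathbf h}$ is the $r$-th iterated difference, the polynomial $g$ of degree $\le k$ is annihilated once $r\ge k$ (or its degree drops), while the differenced version of $Dn^\delta$ behaves like $D N^{\delta-r} n^{\cdots}$ up to Taylor expansion. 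The alternative route giving $\sigma^{-1}=2^k$ when $\ell=k+1$ is to difference exactly $k$ times so that $g$ disappears entirely and one is left with a one-dimensional van der Corput estimate for $Dn^\delta$ with controlled $k$-th derivative size $\asymp DN^{\delta-k}$; this is where the $2^k$ comes from (it is $2^{q}$ with $q=k-?$ in Lemma~\ref{corput}-type bookkeeping), and Lemma~\ref{enlargeint} is used to pass from the subinterval $I$ to all of $(N,2N]$ and to absorb linear terms $\gamma n$ produced along the way.

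The key technical point is that after differencing, the non-polynomial term $Dn^\delta$ must be handled so that one of its \emph{higher} coefficients (not the linear one) is well-approximated by a rational with small denominator, so that Lemma~\ref{Weyl} can be invoked with some $2\le j\le k$. Concretely, I would Taylor-expand $D(n+\text{shift})^\delta$ about a point of $I$, write it as $\sum_{i} \beta_i n^i + (\text{error})$, observe that the leading genuine coefficient is of size roughly $DN^{\delta-k}$ (resp.\ $DN^{\delta-\ell}$ after $\ell$-ish differences), choose a Dirichlet rational approximation $a/q$ to the relevant $\beta_j$ with $q\le N^{j}(DN^{\delta})^{-1}$ or similar, and then feed $(q^{-1}+N^{-1}+qN^{-j})$ into the bound of Lemma~\ref{Weyl}. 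Optimizing the choice of $q$ (balancing $q^{-1}$ against $qN^{-j}$) against the range of $D$ produces exactly the three competing terms $(DN^{\delta-k-1})^\sigma$, $(DN^\delta)^{\sigma/(\ell+1)}N^{-\sigma}$, and $(DN^\delta)^{-\frac{\ell-k}{\ell+1}\sigma}$, with $\sigma^{-1}=\ell(\ell-1)$ coming from the $k(k-1)\to\ell(\ell-1)$ exponent in Weyl's inequality applied to a polynomial of degree $\ell$ rather than $k$ (the extra degrees being the cost of differencing while retaining a clean power-saving shape). The $N^\eps$ is harmless, soaking up the logarithmic losses from Lemma~\ref{enlargeint} and from summing over the difference variables $h_i$.

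The main obstacle, and the step needing genuine care, is the bookkeeping that guarantees a \emph{usable} rational approximation exists in the correct range for the right coefficient $\beta_j$ simultaneously with the interval-length and derivative-size constraints — in other words, verifying that the hypothesis $|\alpha_j-a/q|\le q^{-2}$ of Lemma~\ref{Weyl} can always be met with $q$ in the range that makes the final estimate come out as stated, uniformly over all subintervals $I\subseteq(N,2N]$, all admissible $D$, and all polynomials $g$ of degree $\le k$. One must also check that the error terms in the Taylor expansion of $Dn^\delta$ (which contribute a factor like $e(\text{small})$) are genuinely negligible, i.e.\ that truncating the expansion after the $\ell$-th term costs at most $N^\eps$; this requires $D$ not to be too large, which is implicitly built into the shape of the bound (when $DN^{\delta-k-1}$ is large the first term already exceeds $N$ and the statement is vacuous). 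Once these uniformities are nailed down, assembling the pieces — differencing inequality, Taylor expansion, Dirichlet approximation, Lemma~\ref{Weyl}, Lemma~\ref{enlargeint}, and the optimization over $q$ — is routine, and the two alternative values of $\sigma$ correspond simply to whether one runs the final one-dimensional sum through Weyl's inequality for degree $\ell$ or through the classical van der Corput $k$-th derivative test.
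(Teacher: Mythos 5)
There is a genuine gap: the engine of the paper's proof is a single Weyl \emph{shift} with a short averaging length, not iterated Weyl differencing, and the plan as you describe it cannot reproduce the stated bound without that missing ingredient. The paper sets $f(n)=g(n)+Dn^\delta$, uses $\sum_{n\sim N}e(f(n))=\sum_{n\sim N}e(f(n+m))+O(m)$, averages trivially over $1\le m\le M$ with the crucial choice $M=N(DN^\delta)^{-1/(\ell+1)}$, and Taylor-expands $(n+m)^\delta$ in the \emph{short} variable $m/n$ to order $\ell$. Because $g(n+m)$ has degree at most $k$ in $m$, the coefficient of $m^{k+1}$ in the resulting polynomial $P_\ell(m)$ is purely $c_{k+1}\asymp Dn^{\delta-k-1}$, uncontaminated by $g$; since $|c_{k+1}|<1$, the rational approximation you flagged as the main obstacle is trivial (take $q=\fl{|c_{k+1}|^{-1}}$ with numerator $\pm1$), and then classical Weyl's inequality for the degree-$(k+1)$ polynomial gives $\sigma^{-1}=2^k$ when $\ell=k+1$, while Lemma \ref{Weyl} with $j=k+1$ applied to the degree-$\ell$ polynomial gives $\sigma^{-1}=\ell(\ell-1)$. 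The three terms of the lemma are exactly $q^{-\sigma}$, $M^{-\sigma}$ and $(qM^{-k-1})^{\sigma}$ for this choice of $M$; in particular the middle term $(DN^\delta)^{\sigma/(\ell+1)}N^{-\sigma}$ and the Taylor-remainder factor $1+DN^\delta(M/N)^{\ell+1}$ (removed by partial summation and Lemma \ref{enlargeint}) have no counterpart in your argument, and $\ell$ is the truncation order of the Taylor expansion, not a number of differencing steps.

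Concretely, both branches of your proposal break down. If you Taylor-expand $Dn^\delta$ about a point of $I$ in the variable $n$ over the whole dyadic block, the truncation error has size $DN^\delta(|n-n_0|/N)^{\ell+1}\asymp DN^\delta$, nowhere near $N^\eps$ in the relevant range $1\ll DN^\delta\ll N^{k+1}$, so Lemma \ref{Weyl} cannot be fed a genuine polynomial phase this way. If instead you difference $k$ or more times to annihilate $g$, the surviving phase is an iterated difference of $Dn^\delta$, which is not a polynomial, so Lemma \ref{Weyl} no longer applies and you are forced onto derivative tests; after the $2^k$-power losses, the sums over the variables $h_i$ and the diagonal contributions, the resulting bound has a different shape from the three-term estimate claimed, and the $2^{-k}$ in the statement does not arise from a $k$-fold $A$-process followed by a $k$-th derivative test but from Weyl's inequality in the shift variable. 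The missing idea is precisely the short-shift averaging with its optimized length $M$, which manufactures a clean degree-$\ell$ polynomial in $m$ whose $(k{+}1)$-st coefficient is of known size, after which the rational approximation, the application of Weyl/Bourgain, and the final balancing are immediate.
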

\begin{proof}
	We shall first bound the sum 
	\eqs{\sum_{n \sim N} e( g(n) + D n^\delta)  }	
	for an arbitrary $g(x) \in \R[x]$ with $\deg g \le k$, and the result will follow by Lemma \ref{enlargeint}. 
	
	We can assume that $2^{\ell+1} < D N^\delta <  N^{k+1}$, since otherwise the claimed estimate holds trivially. Put $f(x) = g(x) + D x^\delta$. For $m \in \Z$ with $1 \le m \le M < N/2$, 
	\eqs{\sum_{n \sim N} e(f(n)) = \sum_{n \sim N} e\( f(n+m) \)  + O(m).}
	Thus, summing over $m \in [1, M]$, 
	\eqs{\sum_{n \sim N} e(f(n)) \ll \frac 1 M \sum_{n \sim N} \bigg| \sum_{1 \le m \le M} e\( f(n+m) \)\bigg| + M. }
	Let $R_j (x) = (1+x)^\delta - F_j (x)$, where $F_j (x)= \sum_{0\le i \le j} \tbinom \delta i x^i$ is the $j$th Taylor polynomial of $(1+x)^\delta$. Then, taking  $x = m/n$,
	\bs{f(n+m) 
		&= g(n+m) + Dn^\delta \( F_{\ell} (m/n) + R_{\ell} (m/n) \) \\
		&= P_{\ell} (m) + D n^\delta R_{\ell} (m/n)} 
	where $P_{\ell} (x) = \sum_{i=0}^{\ell} c_i x^i \in \R[x]$, $c_{k+1} = CDn^{\delta-k-1}$, and $0<|C|<1$. Noting that $R_{\ell}' (x) \ll |x|^{\ell}$ uniformly for $|x| \le M/N < 1/2$, we derive by partial integration and Lemma \ref{enlargeint} that 
	\begin{multline*}
		\sum_{m \le M} e\( f(n+m) \) 
		\\ \ll \Bigl(1+ DN^\delta (M/N)^{\ell + 1} \Bigr) \sup_{\gamma \in [0,1)} \bigg| \sum_{1 \le m \le M} e\( P_{\ell} (m)  + \gamma m \) \bigg| \log M.
	\end{multline*} 
	Note that $|c_{k+1} \pm1/q | \le q^{-2}$, where $q=\fl{|c_{k+1}|^{-1}} \ge 1$ since  $|c_{k+1}| < 1$. Then, taking $\ell = k+1$ and applying Weyl's inequality (cf. \cite[Lemma  2.4]{Vaughan})  yields for \emph{any} $\gamma \in \R$ that
	\eqs{\sum_{1 \le m \le M} e\( P_{k+1} (m)  + \gamma m \) \ll M ^{1+\eps} \( q^{-1} + M^{-1} + qM^{-k-1} \)^{2^{-k}},}
	while it follows from Lemma \ref{Weyl} that for arbitrary $\ell \ge k+1$, 
	\eqs{\sum_{1 \le m \le M} e\( P_{\ell} (m)  + \gamma m \) \ll M ^{1+\eps} \( q^{-1} + M^{-1} + qM^{-k-1} \)^{1/\ell(\ell-1)}.}
	In either case, we choose $M = N (DN^\delta)^{-\frac{1}{\ell+1}}$ so that we have $1 < M < N/2$, and thus we obtain
	\eqs{\sum_{n \sim N} e(f(n)) \ll N ^{1+\eps} \( q^{-1} + M^{-1} + qM^{-k-1} \)^{\sigma} + M. }
	Using the definitions of $M$ and $q$, and the fact that $\sigma < (\ell - k)^{-1}$, we see that the contribution of $N^{1+\eps} (qM^{-k-1})^{\sigma}$ is already larger than $M$, thus $M$ can be eliminated, and the result follows. 
\end{proof}

\begin{lemma} \label{TypeII}
	Uniformly for any complex numbers $a_n, b_m$ with $|a_n|, |b_m|\le 1$, and $g(t) \in \R[t]$ of degree not exceeding $k$, 
	\eqs{S(x,y) = \su{m \sim x  \\   n \sim y \\ mn \sim N } a_n b_m e(hn^\delta m^\delta+ g(mn)) \ll N^{1+\eps}\min\{ S_1, S_2, S_3\},
	}
	where
	\eqn{\label{type2}
		\begin{split}
			S_1 &=  (|h| N^{\delta-1} x^{-\ell})^{\frac{\sigma/2}{\sigma + \ell+1}}   + (|h|N^{\delta-1}x^{-k})^{\frac{\sigma/2}{1 + \sigma}} \\  
			& \qquad + (|h|N^\delta)^{-\frac{\ell-k}{\ell+1}\sigma/2} 
			+(x/N)^{1/2} + x^{-\frac{\ell-k}{\ell-k+1}\sigma/2},  \\
			S_2 &= (x/N)^{1/2}+    (|h|N^{\delta})^{-1/k(k+1)^2} + x^{-1/2k(k+1)} \\
			& \qquad +(|h|N^{\delta-1}x^{-k})^{1/2(k^2+k+1)},  \\
			S_3  &=   x^{-2^{-k-1}}+ (x/N)^{1/2} + x^{2^{1-2k}-2^{1-k}} (|h| N^\delta x^{-1-k})^{-2^{-k-1}} \\
			&\qquad + (|h| N^{\delta-1} x^{-k})^{\frac 1 {2^{k+2} - 2} } . 
		\end{split}
	}
\end{lemma}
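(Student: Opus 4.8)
The plan is to remove the bilinear structure by Cauchy--Schwarz in the variable $m$, reduce the inner sum to one of the type handled by Lemma \ref{ShiftLemma}, by Heath--Brown's Lemma \ref{HB}, and by classical Weyl differencing, and then sum the resulting estimate over the two copies $n_1,n_2$ of the variable $n$. Since there are $\ll x$ admissible $m$, Cauchy--Schwarz and expansion of the square give
\mult{
|S(x,y)|^2 \ll x \su{m\sim x} \bigg| \su{n\sim y\\ mn\sim N} a_n\, e\(hn^\delta m^\delta + g(mn)\) \bigg|^2 \\ \ll x \su{n_1,n_2\sim y}\ \bigg| \su{m\sim x\\ mn_1\sim N\\ mn_2\sim N} e\(D\, m^\delta + \tilde g(m)\) \bigg|,
}
where $D = D(n_1,n_2) = h(n_1^\delta - n_2^\delta)$ and $\tilde g(m) = g(mn_1) - g(mn_2) \in \R[m]$ has degree at most $k$, and where $y \asymp N/x$ since $mn\sim N$.

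The diagonal $n_1 = n_2$ contributes $\ll x\cdot y\cdot x \ll xN$ to $|S(x,y)|^2$, hence $\ll N(x/N)^{1/2}$ to $S(x,y)$, which is the term common to $S_1,S_2,S_3$. For the off-diagonal terms one has $|D| \asymp |h|\,y^{\delta-1}|n_1-n_2|$, so it remains to estimate, for each $n_1\ne n_2$, the exponential sum $\cE(D) := \sum_{m\in I} e\(D m^\delta + \tilde g(m)\)$ over the relevant subinterval $I \subseteq (x,2x]$, and then to sum $\cE(D)$ over the $\asymp y$ pairs $(n_1,n_2)$ with a given value of $r = |n_1-n_2| \in [1,y]$, and finally over $r$. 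The constraint $mn\sim N$ confines $m$ to a subinterval of $(x,2x]$ depending on $n_1,n_2$; this is absorbed harmlessly by Lemma \ref{enlargeint}.

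For $\cE(D)$ I would invoke three estimates, giving the three bounds. \emph{(i)} Re-running the differencing in the proof of Lemma \ref{ShiftLemma}: shift $m\mapsto m+\mu$ for $1\le \mu\le M<x/2$, Taylor-expand $(m+\mu)^\delta$, and observe that the coefficient of $\mu^{k+1}$ in the resulting polynomial in $\mu$ comes solely from the term $D m^\delta$ (as $\deg\tilde g\le k$) and has size $\asymp |D|\,x^{\delta-k-1}$; then apply Weyl's inequality (Lemma \ref{Weyl}, or classical Weyl when $\ell=k+1$) with $q\asymp(|D|\,x^{\delta-k-1})^{-1}$, picking up the amplification factor $1 + |D|\,x^\delta(M/x)^{\ell+1}$. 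The point is that $M$ is now chosen \emph{globally} --- rather than optimally for a single $D$ as in Lemma \ref{ShiftLemma} --- so as to balance the error $M$ and the three Weyl quantities (raised to the power $\sigma$) against the subsequent summation over $(n_1,n_2)$ and against the trivial bound $\cE(D)\ll x$, which is all that is available when $|D|\,x^\delta$ is small. This produces $S_1$, and the variant with $\sigma=2^{-k}$, $\ell=k+1$ produces $S_3$. \emph{(ii)} Applying Lemma \ref{HB} directly to $f(m)=D m^\delta+\tilde g(m)$ with derivative order $k+1$, so that $\lambda_{k+1}\asymp|D|\,x^{\delta-k-1}$; summing over $(n_1,n_2)$ and simplifying the powers of $x$ (using $(xy)^\delta\asymp N^\delta$) yields $S_2$.

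In each case, carrying the per-pair bound through the summation over $(n_1,n_2)$ and combining with the diagonal contribution yields $|S(x,y)|\ll N^{1+\eps}S_i$ for the corresponding $i$, and taking the minimum over the three gives the lemma. The main technical obstacle is the optimization in step \emph{(i)}: the shift length $M$ must be selected so that, after integrating over $r\in[1,y]$ and after splitting off the range of small $|D|$ on which only the trivial bound holds, all of the competing quantities are controlled simultaneously; the denominators $\sigma+\ell+1$, $1+\sigma$ and $2-\sigma$ appearing in the exponents of $S_1$ and $S_3$ are precisely the outcome of these balances, and getting them sharp --- rather than the weaker bound one would obtain by using Lemma \ref{ShiftLemma} as a black box with $M$ optimized for each individual $D$ --- is the delicate part of the argument.
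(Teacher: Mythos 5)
Your reduction — remove one set of coefficients by squaring, land on an inner sum over $m\sim x$ with phase $D\,m^{\delta}+\tilde g(m)$ and $D\asymp|h|y^{\delta-1}|n_1-n_2|$, then invoke Lemma \ref{ShiftLemma}, Lemma \ref{HB}, or a classical derivative estimate — is the right skeleton, and your diagonal term correctly produces $(x/N)^{1/2}$. But there is a genuine gap: you use plain Cauchy--Schwarz in $m$ with full expansion over all pairs $(n_1,n_2)$, which is the Weyl--van der Corput inequality with the truncation parameter forced to its maximal value $Q=y$. The paper instead applies the Weyl--van der Corput inequality (Graham--Kolesnik, Lemma 2.5) in the $n$-variable with a \emph{free} shift range $1\le Q\le y$, and then chooses $Q$ optimally (Graham--Kolesnik, Lemma 2.4); it is exactly this optimization that produces the exponent denominators $\sigma+\ell+1$, $1+\sigma$, $k^2+k+1$ and $2^{k+2}-2$ in $S_1,S_2,S_3$ (balance $Q^{-1}$ against the $Q$-averaged per-shift bounds). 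Your proposed substitute — choosing the inner shift length $M$ globally inside the Lemma \ref{ShiftLemma} argument and ``splitting off small $|D|$'' — cannot recover this. The obstruction sits at \emph{large} differences: for $r=|n_1-n_2|$ the relevant $(k+1)$-st Taylor coefficient is $\asymp |h|N^{\delta-1}x^{-k}\,r$, a quantity independent of $M$, and once it exceeds $1$ neither the Weyl/Vinogradov step (no admissible rational approximation with controlled denominator, the lower coefficients of $\tilde g$ being arbitrary) nor Lemma \ref{HB} gives anything below the trivial bound $x$. In a typical regime such as $|h|N^{\delta-1}x^{-k}\approx N^{-\eps}$ and $y\approx N^{1/2}$, almost all pairs are of this kind, so your untruncated sum returns only the trivial estimate, whereas the lemma asserts a power saving, e.g. the term $(|h|N^{\delta-1}x^{-k})^{\frac{\sigma/2}{1+\sigma}}$ in $S_1$. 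The only remedy is to limit the range of differencing, i.e. to keep $Q$ as a parameter — which is precisely what the paper does.

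Two smaller misattributions point to the same misunderstanding of where the exponents come from: $S_3$ is not the $\sigma=2^{-k}$, $\ell=k+1$ variant of your step (i) — in the paper it comes from applying van der Corput's estimate (Lemma \ref{corput}, with derivative order $k+2$, so $Q=2^k$ there) to the inner $m$-sum, and its exponents $2^{-k-1}$, $\frac{1}{2^{k+2}-2}$ do not match the $\sigma=2^{-k}$ specialization of $S_1$; and the denominators you describe as ``the outcome of the balances'' in the choice of $M$ in fact arise from the choice of the outer truncation $Q$, the $M$-optimization being already internal to Lemma \ref{ShiftLemma} (it is responsible for the $\frac{\sigma}{\ell+1}$-type exponents and the $(DN^{\delta})^{-\frac{\ell-k}{\ell+1}\sigma}$ term there). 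With the Cauchy--Schwarz step replaced by the truncated Weyl--van der Corput inequality and the optimization over $Q\le y$ carried out, your outline matches the paper's proof.
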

\begin{proof}
	We may assume that $|h|N^{\delta-1}x^{-k}<1$; otherwise, the assertion is trivial. Applying Weyl-van der Corput inequality (cf. \cite[Lemma 2.5]{GraKol}) we see that
	\eqs{S^2(x,y) \ll \frac{(xy)^2}{Q}+\frac{xy^2}{Q}\sum_{ 1 \le |q| \le Q} \max_{y < n, n+q \le 2y} | \Ga(q,n,x)| }
	where $1 \le Q \le y$ is to be chosen optimally, and 
	\eqn{\label{Gammaqnx}
		\Ga(q,n,x) = \su{m \in I} e \(h ((n+q)^\delta-n^\delta)m^\delta+ g\( (n+q)m \) - g( nm ) \),}
	where $I \subseteq (x,2x]$ is an interval determined by the conditions $m \sim x$, $nm \sim N$, and $(n+q)m \sim N$. 
	
	If we apply Lemma \ref{ShiftLemma} with $D = |h \((n+q)^\delta-n^\delta\)|$, noting that $D \asymp |hq|y^{\delta-1}$, we obtain 
	\begin{multline*}
		\Ga(q,n,x) \ll x^{1+\eps} \bigl( (|hq|N^{\delta-1}x^{-k})^{\sigma} \\
		+ (|hq|N^{\delta-1}x^{-\ell})^{\frac{\sigma}{\ell+1}}  + (|hq|N^{\delta-1}x)^{-\frac{\ell-k}{\ell+1}\sigma} \bigr).
	\end{multline*}
	Inserting this estimate above and summing over $q$ yields 
	\begin{multline*}
		S^2(x,y) (xy)^{-2-\eps} \ll Q^{-1} + (Q|h|N^{\delta-1}x^{-k})^{\sigma} \\ +  (Q|h|N^{\delta-1}x^{-\ell})^{\frac{\sigma}{\ell+1}} + (Q|h|N^{\delta-1}x)^{-\frac{\ell-k}{\ell+1}\sigma}.
	\end{multline*}
	Using \cite[Lemma 2.4]{GraKol} to choose $1 \le Q \le y$ optimally, we conclude that
\eqs{\begin{split}
		S^2(x,y) (xy)^{-2-\eps} \ll  (|h|N^{\delta-1}x^{-\ell})^{\frac{\sigma}{\sigma+ \ell+1}} + (|h|N^{\delta-1}x^{-\ell})^{\frac{\sigma}{\ell+1}}  
		+ xN^{-1} \\
		\hfill + (|h|N^{\delta-1}x^{-k})^{\sigma} + (|h|N^{\delta-1}x^{-k})^{\frac{\sigma}{1 + \sigma}} \\ +  (|h|N^\delta)^{-\frac{\ell-k}{\ell+1}\sigma}  
		+ x^{-\frac{\ell-k}{\ell-k+1}\sigma}  + (x^{-k-1})^{\frac{\ell-k}{2\ell + 1- k}\sigma}. 
\end{split}}
	Since $|h|N^{\delta-1}x^{-k}<1$, we can eliminate the second and the fourth terms, and the last term is smaller than the penultimate one. 
	
	If we instead apply Lemma \ref{HB} (with $k+1$ in place of $k$) to \eqref{Gammaqnx}, we obtain 
	\begin{multline*}
		\Ga(q,n,x) \ll x^{1+\eps} \Bigl( (|hq|N^{\delta-1}x^{-k})^{1/k(k+1)} \\
		+ x^{-1/k(k+1)}  + (|hq|N^{\delta-1}x)^{-2/k(k+1)^2} \Bigr),
	\end{multline*}
	which yields 
	\begin{multline*}
		S^2(x,y) (xy)^{-2-\eps} \ll Q^{-1} + (Q|h|N^{\delta-1}x^{-k})^{1/k(k+1)} \\ + x^{-1/k(k+1)}  + (Q|h|N^{\delta-1}x)^{-2/k(k+1)^2}.
	\end{multline*}
	Using \cite[Lemma 2.4]{GraKol} once again, we conclude that
	\begin{multline*}
		S^2(x,y) (xy)^{-2-\eps} \ll x/N+  (|h|N^{\delta-1}x^{-k})^{1/k(k+1)}+ x^{-1/k(k+1)}  \\ +  (|h|N^{\delta})^{-2/k(k+1)^2} + x^{-2/k(k+3)}+(|h|N^{\delta-1}x^{-k})^{1/(k^2+k+1)}  . 
	\end{multline*}		
	Since $|h|N^{\delta-1}x^{-k}<1$, we eliminate the second term. 
	
	Finally, if we apply van der Corput's result, Lemma \ref{corput}, to \eqref{Gammaqnx} and carry on similar calculations as above, we derive the desired estimate. 
\end{proof}

\begin{lemma} \label{ST}
	For any $\eps>0$, and $c \in (1,2)$, 
	\eqs{
		\begin{split}
			S_{c,3}(\alpha,X) &= T_{c,3}(\alpha,X) +  
			O\Bigl(  X^\eps\max\Big\{ X^{\frac{76\delta+77}{156}}, X^{\frac{79\delta+75}{157}} \Big\}\Bigr) \\
			\Sc &= \Tc + O\( X^{(1+\delta)\frac{\nu-1}{2\nu-1}+\eps}\), \quad k \ge 4
		\end{split}
	}
	holds uniformly for $\alpha \in \R$, where $\nu$ is given by \eqref{nu}.
\end{lemma}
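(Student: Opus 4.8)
\section*{Proof proposal for Lemma \ref{ST}}

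The plan is to reduce $\Sc-\Tc$ to an exponential sum over primes carrying both the polynomial phase $\alpha p^k$ and a Piatetski--Shapiro phase, and then to attack the latter with Vaaler's approximation, Vaughan's identity, and the exponential-sum estimates of Lemmas \ref{ShiftLemma}, \ref{TypeII}, \ref{HB} and \ref{corput}. First I would insert the characterization \eqref{PScharacterization} and the expansion \eqref{PSapproximation} to obtain
\eqs{\Sc=\su{p\le X}\bigl(\fl{-p^\delta}-\fl{-(p+1)^\delta}\bigr)e(\alpha p^k)=\Tc+O(1)+E,\qquad E:=\su{p\le X}\Delta\psi(p)\,e(\alpha p^k),}
where the $O(1)$ absorbs the $O(p^{\delta-2})$ terms (which sum to $\ll\sum_{n\ge1}n^{\delta-2}\ll1$ since $\delta<1$). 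After a dyadic decomposition it suffices to bound $E_N=\sum_{p\sim N}\Delta\psi(p)e(\alpha p^k)$ for each $N\le X$. For a parameter $H=H(N)$ to be chosen, Lemma \ref{Vaaler} replaces $\psi$ by $\psi^\ast$ at the cost of a nonnegative majorant; the majorant error contributes $\ll N/H$ from the $h=0$ frequency, plus $\sum_{1\le|h|<H}|b_h|\,\bigl|\sum_{n\sim N}e(-hn^\delta)\bigr|$, which by the van der Corput second-derivative test (the phase has second derivative $\asymp|h|N^{\delta-2}$) is $\ll H^{1/2}N^{\delta/2}+H^{-1/2}N^{1-\delta/2}$. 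This is a routine estimate, and with the final choice of $H$ it will be dominated by the main term.

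The main term is $\sum_{1\le|h|\le H}a_h\sum_{p\sim N}\bigl(e(-h(p+1)^\delta)-e(-hp^\delta)\bigr)e(\alpha p^k)$. Writing $e(-h(p+1)^\delta)-e(-hp^\delta)=e(-hp^\delta)\bigl(e(h(p^\delta-(p+1)^\delta))-1\bigr)$ and removing the smooth factor $w_h(t)=e(h(t^\delta-(t+1)^\delta))-1$ (of size $\ll|h|N^{\delta-1}$ and of comparable total variation, since $w_h'(t)\ll|h|N^{\delta-2}$) by partial summation, together with $|a_h|\ll|h|^{-1}$, yields
\eqs{|E_N^{\mathrm{main}}|\ll N^{\delta-1}\su{1\le|h|\le H}\ \sup_{N<M\le 2N}\Bigl|\su{N<p\le M}e(\alpha p^k-hp^\delta)\Bigr|.}
For each fixed $h$ I would then pass from $\sum_p$ to $\sum_n\Lambda(n)$ by partial summation (the prime powers costing $O(N^{1/2+\eps})$, the removal of $\log p$ costing $\cL^{O(1)}$), and apply Vaughan's identity (Lemma \ref{VaugID}) with parameters chosen so that the three pieces split into Type~I sums, with outer variable bounded by some $D_1$, and Type~II bilinear sums, with split point $D_2$ in $[D_1,N/D_1]$ and divisor-bounded coefficients. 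The Type~I inner sum is $\sum_{m\in I}e(\alpha d^k m^k-hd^\delta m^\delta)=\sum_{m\in I}e(g(m)+Dm^\delta)$ with $g(m)=\alpha d^km^k$ of degree $k$ and $D=|h|d^\delta$, so Lemma \ref{ShiftLemma} applies directly (with $\ell=k+1$, $\sigma^{-1}=2^k$, for $k=3$, and $\ell=\fl{3k/2}$, $\sigma^{-1}=\ell(\ell-1)$, for $k\ge12$). Each Type~II sum is precisely the sum $S(x,y)$ of Lemma \ref{TypeII} with $g(t)=\alpha t^k$ and $x=\min\{D_2,N/D_2\}$; here one uses the bound $S_3$ for $k=3$, $S_2$ for $4\le k\le11$, and $S_1$ for $k\ge12$. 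A key point is that all three exponential-sum lemmas are uniform in the polynomial $g$, which is the only place $\alpha$ enters, so the resulting estimate is automatically uniform in $\alpha$.

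Finally I would assemble the Type~I total, the Type~II total (each a sum over the dyadic values of $d$ and then over $1\le|h|\le H$ of the corresponding bounds), and the Vaaler majorant error $N/H+H^{1/2}N^{\delta/2}+H^{-1/2}N^{1-\delta/2}$, all multiplied by the factor $N^{\delta-1}$ where appropriate, and then optimize the Vaughan parameters (equivalently $D_1,D_2$) and $H$, taking the worst dyadic scale $N\asymp X$. For $k\ge4$ the dominant balance is between $N/H$ and the term $(|h|N^\delta)^{-1/\nu}$ coming from $S_1$ or $S_2$ — note that $k(k+1)^2=\nu$ when $4\le k\le11$, while $\tfrac{\ell-k}{\ell+1}\cdot\tfrac\sigma2=\tfrac1\nu$ with $\ell=\fl{3k/2}$ when $k\ge12$ — so that, writing $H=N^\theta$, one solves $N^{1-\theta}=N^{(\delta+\theta)(1-1/\nu)+\eps}$ and finds $1-\theta=(1+\delta)(\nu-1)/(2\nu-1)$, which is exactly the claimed exponent; one then checks that the remaining Type~I, Type~II and Vaaler terms are no larger for all $c\in(1,2)$. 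For $k=3$ the same scheme runs through the three terms of the van der Corput estimate feeding $S_3$ and the $2^{-k}$-exponent of Lemma \ref{ShiftLemma}; two competing optimal choices of the Type~II split point produce the two powers $X^{(76\delta+77)/156}$ and $X^{(79\delta+75)/157}$ in the asserted maximum. The main obstacle is precisely this last optimization: there are on the order of a dozen error terms depending on $d$, $h$, $N$, $\delta$ and the free parameters, and one must exhibit a single choice of parameters making every one of them $\le$ the stated bound simultaneously and uniformly in $\alpha$; the exponential-sum inputs being all quoted, what remains is careful bookkeeping together with the standard partial-summation reductions, each of which costs only $N^\eps$.
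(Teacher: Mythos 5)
Your outline follows the paper's proof almost step for step: the reduction \eqref{STalpha} via \eqref{PScharacterization}--\eqref{PSapproximation}, Vaaler's lemma with the $h=0$ and $h\neq0$ majorant terms, the gain of $N^{\delta-1}$ by partial summation against $\Delta\psi^\ast$, Vaughan's identity splitting into Type I sums treated by Lemma \ref{ShiftLemma} and bilinear sums treated by Lemma \ref{TypeII} (with $S_3$, $S_2$, $S_1$ exactly as the paper assigns them to $k=3$, $4\le k\le 11$, $k\ge12$), and finally the balance of $NH_N^{-1}$ against the $(|h|N^\delta)^{-1/\nu}$ term, which correctly reproduces the exponent $(1+\delta)\frac{\nu-1}{2\nu-1}$ for $k\ge4$. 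Two caveats on the Type I side, though: for $4\le k\le11$ the paper does not use Lemma \ref{ShiftLemma} for $E_1$ but applies Lemma \ref{HB} directly (giving \eqref{E1HB}, whose $u$-independent term $B^{-2/k(k+1)^2}$, $B=H_NN^\delta$, is harmless); you left this range unspecified, and if you use Lemma \ref{ShiftLemma} there you must check case by case which admissible $(\ell,\sigma)$ keeps the Type I term below $B^{-1/k(k+1)^2}$ (e.g.\ the Weyl choice works for $k=4$ but not $k=5$, and the $\ell(\ell-1)$ choice works for $k=5$ but not $k=4$).

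The genuine gap is in the case $k=3$, where your proposal commits to Lemma \ref{ShiftLemma} with $\ell=k+1$, $\sigma^{-1}=2^k=8$ for the Type I sums. That produces in the analogue of \eqref{E1contr} the $u$-independent term $B^{1-\frac{\ell-k}{\ell+1}\sigma+\eps}=B^{1-\frac1{40}+\eps}$, and since $NH_N^{-1}$ is the only quantity decreasing in $H_N$, the best possible outcome of the optimization is $N^{\frac{39}{79}(1+\delta)+\eps}$. This exceeds both claimed exponents for every $\delta\in(1/2,1)$: at $\delta=1/2$ one has $58.5/79\approx0.7405$ against $\max\{115/156,114.5/157\}\approx0.7372$, and at $\delta=1$ one has $78/79\approx0.9873$ against $\max\{153/156,154/157\}\approx0.9809$, with all exponents linear in $\delta$. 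So your scheme cannot reach the stated bound for $k=3$. The paper avoids this by estimating the $k=3$ Type I sums with van der Corput's third-derivative test, Lemma \ref{corput} with $q=2$, which yields \eqref{3E1}; its $u$-independent term $B^{7/8}N^{1/16}$ balances against $NH_N^{-1}$ at $N^{\frac{15+14\delta}{30}}$, safely below the claim, and only then does the joint optimization over $u$ and $H_N$ (via \cite[Lemma 2.4]{GraKol}) produce the two exponents $\frac{76\delta+77}{156}$ and $\frac{79\delta+75}{157}$; they arise from that final bookkeeping, not merely from two choices of the Type II split point. With the Type I estimate for $k=3$ replaced by Lemma \ref{corput}, your argument matches the paper's.
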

\begin{proof}
	By \eqref{PScharacterization}, \eqref{PSapproximation} and Merten's Theorem (see \cite[Equation (2.15)]{Iwaniec})
	\eqn{\label{STalpha}
		\Sc = \Tc + \sum_{p \le X} e(\alpha p^k) \Delta \psi (p) + O(\log \log X).}
	In order to bound the middle term on the right, we divide the range of summation $[2,X]$ into dyadic intervals of the form $(N,2N]$. Applying Lemma \ref{Vaaler} on each such interval, we see that
	\eqs{\su{p \sim N} e(\alpha p^k) \Delta (\psi - \psi^\ast) (p) \ll H_N^{-1} \sum_{|h|<H_N} \bigg| \sum_{n \sim N} e(h n^\delta) \bigg|. }
	Using the exponent pair $(1/2,1/2)$ (cf. \cite[Ch.~3]{GraKol}) we obtain the estimate
	\eqs{\sum_{n \sim N} e(h n^\delta) \ll |h|^{1/2}N^{\delta/2} + |h|^{-1} N^{1-\delta}  \qquad \qquad (h \neq 0) }
	so that
	\eqn{\label{PsitoPsiStar} 
		\su{p \sim N} e(\alpha p^k) \Delta (\psi - \psi^\ast) (p) \ll  NH_N^{-1}  + B^{1/2} + H_N^{-1} \log H_N N^{1 - \delta},}
	where $B = H_N N^\delta$.
	
	Next, we turn to the sum involving $\psi^\ast$. First using partial summation and then introducing von Mangoldt function we obtain 
	\eqs{\su{p \sim N} e(\alpha p^k) \Delta \psi^\ast (p) \ll \frac 1 {\log N} \max_{N' \le 2N} \bigg| \sum_{N < n \le N'} \Delta \psi^\ast (n) e(\alpha n^k)  \lm (n) \bigg| + \sqrt N. 
	} 
	Recalling the definition of  $\psi^\ast$ it is not too hard (see \cite[4.6]{GraKol}) to derive that
	\bsc{\label{BN}
		&\su{p \sim N} e(\alpha p^k) \Delta \psi^\ast (p) \ll \Theta(N)+ \sqrt N, \\  
		& \Theta(N) = \frac{N^{\delta-1}}{\log N} \sum_{1 \le |h| \le H_N} \max_{N' \le 2N} \bigg| \sum_{N < n \le N'} e(\alpha n^k + h n^\delta)  \lm (n) \bigg| .}
	Assume that $u, v \ge 1$ are real numbers with $uv \le N$. Using Lemma \ref{VaugID} we write the inner sum on the right as $E_1 - E_2 - E_3$ where 
	\mult{E_1 = \su{n \le u} \mu (n) \su{N/n < m \le N'/n} e \( \alpha (nm)^k + h (nm)^\delta \) \log m  \\
		- \su{m \le u} \Bigl( \su{ab = m \\ b \le u,  a \le v} \mu(b) \lm (a) \Bigr) \su{N/m < n \le N'/m}  e \( \alpha (nm)^k + h (nm)^\delta \), }
	\eqs{E_2 = \su{N < nm \le N' \\ n>v, m>u} \lm(n) \Bigl( \su{d \mid m \\d \le u} \mu(d) \Bigr) e \( \alpha (nm)^k + h (nm)^\delta \), }
	and
	\eqs{E_3 = \su{N < nm \le N' \\ u < m \le uv} \Bigl( \su{ab = m \\ b \le u,  a \le v} \mu(b) \lm (a) \Bigr) e \( \alpha (nm)^k + h (nm)^\delta \) .}
	Note that 
	\eqn{\label{E1bdd}
		E_1 \ll \log N\sum_{1 \le n \le u} \max_{N<N'\le 2N} \bigg| \su{N/n < m \le N'/n} e \( \alpha (nm)^k + h (nm)^\delta \) \bigg|.}
	Thus, applying Lemma \ref{ShiftLemma} with $D = |h| n^\delta$ to the inner sum above and summing over $n \le u$ we obtain
	\mult{E_1 \ll N^{1+\eps} \Bigl( (|h|N^\delta)^{\sigma} (u/N)^{(k+1)\sigma-\eps} \\
		+ (|h|N^\delta)^{\frac{\sigma}{\ell+1}} (u/N)^{\sigma-\eps} + (|h| N^\delta)^{-\frac{\ell-k}{\ell+1} \sigma} \Bigr) .}
	Hence, the contribution to \eqref{BN} from $E_1$ is
	\eqn{ \label{E1contr}
		\ll B^{1+\eps} \Bigl( B^{\sigma} (u/N)^{(k+1)\sigma-\eps} + B^{\frac{\sigma}{\ell+1}} (u/N)^{\sigma-\eps} + B^{-\frac{\ell-k}{\ell+1} \sigma} \Bigr) .}
	
	Next, we estimate the bilinear sums $E_2$ and $E_3$. We first note that $E_2 \ll N^\eps \sum_{x, y} |S(x,y)|$, where 
	\eqs{S(x,y) = \sum_{m \sim x} b_m \su{n \sim y\\N < nm \le N' } a_n e \( \alpha (nm)^k + h (nm)^\delta \), }
	with $y>v, x>u$, $xy \asymp N$ and $|a_n|, |b_m| \le 1$. 
	Also, $E_3 \ll \log N \sum_{x,y} |S(x,y)|$
	with a similar bilinear sum $S(x,y)$, where $u<x \le uv$, $xy \asymp N$ and $|a_n|, |b_m| \le 1$. Applying the bound $S_1$ in Lemma \ref{TypeII} we obtain
	\begin{multline*}
		E_2 + E_3 \ll N^{1+2\eps} \Bigl( 
		v^{-1/2} + (uv/N)^{1/2} + u^{-\frac{\ell-k}{\ell-k+1}\sigma/2} + (|h|N^\delta)^{-\frac{\ell-k}{\ell+1}\sigma/2} \\ 
		+ (|h|N^{\delta-1}u^{-\ell})^{\frac{\sigma/2}{\sigma + \ell+1}} + (|h|N^{\delta-1}u^{-k})^{\frac{\sigma/2}{1 + \sigma}}  \Bigr) .
	\end{multline*}
	Choosing $v=(N/u)^{1/2}$, we see that the contribution of $E_2+ E_3$ to \eqref{BN} is
	\begin{multline}\label{E23contr}
		\ll B^{1 + 2\eps} \Bigl( 
		(u/N)^{1/4} + u^{-\frac{\ell-k}{\ell-k+1}\sigma/2} + B^{-\frac{\ell-k}{\ell+1}\sigma/2} \\ 
		+ (B N^{-1}u^{-\ell})^{\frac{\sigma/2}{\sigma + \ell+1}} 
		+ (B N^{-1}u^{-k})^{\frac{\sigma/2}{1 + \sigma}}  \Bigr) .
	\end{multline} 
	Combining \eqref{PsitoPsiStar}, \eqref{E1contr} and \eqref{E23contr} we conclude that
	\begin{multline*}
		\sum_{p \sim N} e(\alpha p^k) \Delta \psi (p) \ll 
		NH_N^{-1}  + B^{1-\frac{\ell-k}{\ell+1}\sigma/2+\eps} + B^{1/2} \\
		\quad+ B^{1+\eps} \Bigl( B^{\sigma} (u/N)^{(k+1)\sigma-\eps} + (u/N)^{1/4} + B^{\frac{\sigma}{\ell+1}} (u/N)^{\sigma-\eps} \\
		+ u^{-\frac{\ell-k}{\ell-k+1}\sigma/2} + (B N^{-1}u^{-\ell})^{\frac{\sigma/2}{\sigma + \ell+1}} 
		+ (B N^{-1}u^{-k})^{\frac{\sigma/2}{1 + \sigma}} \Bigr).
	\end{multline*}
	Note that the second term dominates the third. Since the first two terms are independent of $u$, we choose
	\eqs{H_N = N^{1 - (1+\delta) \frac{1-A(\ell)}{2-A(\ell)}}, \qquad \quad A(\ell) = \frac{(\ell-k)\sigma}{2(\ell+1)} }
	so as to balance them first. Note that with this choice, we have $1 < H_N < N$, and
	\eqs{N H_N^{-1} = N^{(1+\delta) \frac{1-A(\ell)}{2-A(\ell)}}, \qquad B = H_N N^\delta = N^{\frac{1+\delta}{2-A(\ell)}}.}
	In order to minimize $NH_N^{-1}$,  we set 
	\eqs{A = A_k = \max_{\ell \ge k+1} A(\ell).}
	It follows by an easy computation that $\ell = \fl{3k/2}$ maximizes $A(\ell)$, and with this choice of $A$, we find by setting $u=N^{1/2}$ that all the remaining terms are smaller than $NH_N^{-1}$, and thus we conclude that
	\bsc{\label{klarge}
		\su{p \sim N} e(\alpha p^k) \Delta \psi (p) \ll N^{(1+\delta)\frac{1-A}{2-A} +\eps}.}
	
	Next, we estimate the inner sum in \eqref{E1bdd} using Lemma \ref{HB}. This gives
	\eqs{E_1 \ll N^{1+\eps} \Bigl( (hN^\delta)^{1/k(k+1)} (u/N)^{1/k} + (u/N)^{1/k(k+1)} + (hN^\delta)^{-2/k(k+1)^2} \Bigr),
	}
	whose contribution to \eqref{BN} is
	\eqn{\label{E1HB}
		\ll B^{1+\eps} \Bigl( B^{1/k(k+1)} (u/N)^{1/k} + (u/N)^{1/k(k+1)} + B^{-2/k(k+1)^2} \Bigr).
	}
	Using the bound $S_2$ in \eqref{type2}, we see that the contribution of $E_2+ E_3$ to \eqref{BN} is
	\multn{\label{E23HB}
		\ll B^{1 + 2\eps} \Bigl( (u/N)^{1/4} + B^{-1/k(k+1)^2} + u^{-1/2k(k+1)} \\
		+(BN^{-1}u^{-k})^{1/2(k^2+k+1)} \Bigr).}
	Combining \eqref{PsitoPsiStar}, \eqref{E1HB} and \eqref{E23HB} shows that \eqref{BN} is bounded by
	\mult{NH_N^{-1} +  B^{1-1/k(k+1)^2} + B^{1 +\eps} \Bigl( B^{1/k(k+1)} (u/N)^{1/k} + (u/N)^{1/k(k+1)}  \\ 
		+ u^{-1/2k(k+1)} + (BN^{-1}u^{-k})^{1/2(k^2+k+1)} \Bigr). 
	}
	Choosing $H_N$ to balance the first two terms again gives
	\eqs{NH_N^{-1} = N^{(1+\delta) \frac{1-C}{2-C}}, \qquad C = \frac 1 {k(k+1)^2}.
	}
	As before, for $u=N^{1/2}$, all the remaining terms are dominated by $NH_N^{-1}$. Thus,
	\eqn{\label{ksmall}
		\su{p \sim N} e(\alpha p^k) \Delta \psi (p) \ll N^{(1+\delta)\frac{1-C}{2-C} +\eps}.}
	
	One can easily check that for $3 \le k \le 11$, using Heath-Brown's result (Lemma \ref{HB}) gives a better estimate since $C > A$. For $k \ge 12$, however, $A > C$, which explains our choice in \eqref{nu}. 
	
	Finally, (only) for $k=3$, one can do slightly better than Heath-Brown's estimate by using van der Corput's estimate; namely, by Lemma \ref{corput} with $q=2$,
	it follows that
	\eqs{E_1 \ll  N \log N\( \( hN^\delta(u/N)^4\)^{\frac{1}{14}}+(u/N)^{\frac{1}{8}}+(hN^\delta)^{-\frac{1}{8}}N^{\frac{1}{16}}
		\)
	}
	whose contribution to \eqref{BN} is
	\eqn{\label{3E1} \ll B \log N \Bigl( 
		(B (u/N)^4 )^{\frac{1}{14}} + (u/N)^{\frac{1}{8}} + B^{-\frac 1 {8}} N^{\frac 1 {16}}
		\Bigr).}
	On the other hand, using $S_3$ in \eqref{TypeII}, we obtain for $k=3$, 
	\mult{E_2+E_3 \ll N^{1+\eps} \bigl( 
		u^{-\frac 1 {16}} + B^{\frac 1 {30}} u^{-\frac 1 {10}} + v^{-\frac{1}{2}} + B^{-\frac 1 {16}} (N/v)^{\frac{1}{32}} \\ +(uv/N)^{\frac{1}{2}}+B^{-\frac 1 {16}} (uv)^{\frac{1}{32}} 
		\bigr).}
	Choosing $v = \sqrt{N/u}$ and summing over $h$, the contribution from $E_2+ E_3$ is
	\eqn{\label{3E23}
		\ll  B^{1+\eps} \bigl( 
		u^{-\frac 1 {16}} + B^{\frac 1 {30}} N^{-\frac 1 {30}} u^{-\frac 1 {10}} + (u/N)^{\frac 1 4} + B^{-\frac 1 {16}} (uN)^{\frac 1 {64}}
		\bigr).}
	Combining \eqref{PsitoPsiStar}, \eqref{3E1} and \eqref{3E23}, the total contribution is 
	\mult{\ll NH_N^{-1} + B^{1-\frac 1 {8}} N^{\frac 1 {16}} + B^{1+\eps} \bigl( 
		u^{-\frac 1 {16}} + B^{\frac 1 {30}} N^{-\frac 1 {30}} u^{-\frac 1 {10}} 
		+ B^{-\frac 1 {16}} (uN)^{\frac 1 {64}} \\ 
		+  B^{\frac{1}{14}} (u/N)^{\frac{4}{14}}  + (u/N)^{\frac{1}{8}} \bigr).}
	Choosing $u$ optimally above by using \cite[Lemma 2.4]{GraKol} with $1 \le u \le N$, we have that \eqref{BN} is bounded by
	\mult{\ll NH_N^{-1} + B^{1+\eps} \Bigl( N^{-\frac 1 {16}} + B^{\frac 1 {30}} N^{-\frac 2 {15}} 
		+ B^{-\frac 1 {16}} N^{\frac 1 {64}} +  B^{\frac{1}{14}} N^{-\frac{4}{14}}  + B^{-\frac{1}{20}} N^{\frac{1}{80}}\\
		+ B^{\frac{1}{78}} N^{-\frac{4}{78}} +  N^{-\frac{1}{24}} + B^{-\frac{11}{222}} N^{\frac{1}{111}} + B^{\frac{7}{162}} N^{-\frac{8}{81}} + B^{\frac{1}{54}} N^{-\frac{2}{27}} \Bigr),}
	Here, only the first term has $H_N$ with a negative exponent. Balancing terms with an appropriate $1 \le H_N \le N$ using \cite[Lemma 2.4]{GraKol} again, \eqref{BN} is bounded by
	\mult{
		\ll N^{\eps}\Bigr(1 + N^{\frac {14\delta+1}  {16}} + N^{\frac {31\delta-4} {30}}  + N^{\frac {60\delta +1}{64}} +  N^{\frac{15\delta - 4}{14}}  + N^{\frac{76\delta +1}{80}}
		+ N^{\frac{79\delta-4}{78}}  + N^{\frac{24\delta-1}{24}} \\
		+ N^{\frac{211\delta+2}{222}} + N^{\frac{169\delta-16}{162}} + N^{\frac{55\delta-4}{54}} + N^{\frac{7\delta}{15}+\frac 1 2} + N^{\frac{31\delta+27}{61}} + N^{\frac{60\delta+61}{124}} + N^{\frac{15\delta+11}{29}} \\
		+ N^{\frac{76\delta+77}{156}}+ N^{\frac{79\delta+75}{157}} + N^{\frac{\delta}{2}+\frac{23}{48}}+ N^{\frac{211\delta+213}{433}}+ N^{\frac{169\delta+153}{331}}+ N^{\frac{55\delta+51}{109}}\Bigl)
		.}
	Comparing all the terms under the assumption that $\delta \in (1/2,1)$, we end up with \eqn{\label{k=3}
		\su{p \sim N} e(\alpha p^k) \Delta \psi (p) \ll N^{\eps} \max \big\{ N^{\frac{76\delta+77}{156}}, N^{\frac{79\delta+75}{157}} \big\}.}
	The result follows by inserting \eqref{klarge}, \eqref{ksmall} and \eqref{k=3} back to \eqref{STalpha}.
\end{proof}

\begin{lemma} \label{Sv}
	If $1 < c < 12/11$, then for any $\alpha \in \M(a,q)$ with $\gcd(a,q)=1$, $1 \le a \le q \le \cL^\kappa$ and sufficiently large $X$,  we have
	\eqs{\Sc  - v(\alpha-a/q) \ll X^\delta \exp(-C \sqrt {\log X}),}
	where $C>0$ is an absolute constant and the implied constant depends only on $\kappa$ and $k$. 
\end{lemma}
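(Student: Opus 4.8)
\emph{Proof strategy.} The plan is to work directly on the major arc and reduce $\Sc$ to a count of Piatetski-Shapiro primes in arithmetic progressions, the arithmetic input being a Siegel--Walfisz-type theorem for such primes. Write $\alpha=a/q+\beta$, so that $|\beta|\le\cL^\kappa X^{-k}q^{-1}$. Since the primes $p\le X$ dividing $q$ number $O(\log q)=O(\log\log X)$ and contribute negligibly, and since $p\equiv b\pmod q$ with $(b,q)=1$ forces $ap^k/q\equiv ab^k/q\pmod 1$, we obtain
\eqs{\Sc=\su{1\le b\le q\\(b,q)=1}e(ab^k/q)\su{p\le X,\ p\in\cP_c\\ p\equiv b\,(q)}e(\beta p^k)+O(\log\log X).}
On $[2,X]$ the weight $t\mapsto e(\beta t^k)$ has total variation $\ll|\beta|X^k\le\cL^\kappa q^{-1}$, so partial summation turns the inner sum into $\int_2^X e(\beta t^k)\,d\pi_c(t;q,b)$, where $\pi_c(t;q,b)$ counts the $p\le t$ in $\cP_c$ with $p\equiv b\pmod q$.

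The needed input is a Piatetski-Shapiro prime number theorem in progressions: for $1<c<12/11$ there is $c_1>0$ such that, uniformly for $(b,q)=1$ and $q\le(\log t)^A$ with $A$ fixed,
\eqs{\pi_c(t;q,b)=\frac1{\varphi(q)}\int_2^t\frac{\delta u^{\delta-1}}{\log u}\,du+O\bigl(t^\delta\exp(-c_1\sqrt{\log t})\bigr).}
This follows exactly as in the first half of the proof of Lemma~\ref{ST}: by \eqref{PScharacterization}, \eqref{PSapproximation} and Lemma~\ref{Vaaler} the problem reduces to bounding $\sum_{p\le t,\ p\equiv b\,(q)}\Delta\psi(p)$; after detecting the progression by Dirichlet characters modulo $q$ and applying Vaughan's identity (Lemma~\ref{VaugID}), one is left with character-twisted Type~I and Type~II sums of the shape $\sum_{m\sim x,\ n\sim y}\chi(m)\chi(n)\,e(h(mn)^\delta)$, which are the $g\equiv0$ specialisations of Lemmas~\ref{ShiftLemma} and \ref{TypeII} (the characters, having modulus $1$ and period $q=\cL^\kappa$, cost only a fixed power of $\log X$), and the saving $\exp(-c_1\sqrt{\log t})$ results from balancing against the classical Siegel--Walfisz estimate for $\sum_{p\le t,\ p\equiv b\,(q)}\delta p^{\delta-1}$. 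Alternatively one may simply quote such a statement from the literature. The hypothesis $c<12/11$ is used precisely at this point, being exactly the range in which these non-polynomial exponential sums admit a power saving (via the exponent pair $(1/2,1/2)$).

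Inserting this asymptotic and integrating the resulting error by parts --- bounding $|\beta|t^{k-1}\ll\cL^\kappa X^{-1}q^{-1}$ on $[2,X]$ --- gives
\eqs{\su{p\le X,\ p\in\cP_c\\ p\equiv b\,(q)}e(\beta p^k)=\frac1{\varphi(q)}\int_2^X\frac{\delta t^{\delta-1}e(\beta t^k)}{\log t}\,dt+O\bigl(\cL^\kappa q^{-1}X^\delta\exp(-c_1\sqrt{\log X})\bigr),}
and the integral is $\cJ(\beta)$ (recall $X=N^{1/k}$, the discrepancy at the lower limit being $O(1)$). Summing over $b$ and using $\sum_{1\le b\le q,\ (b,q)=1}e(ab^k/q)=S(a,q)$, we conclude
\eqs{\Sc=\frac{S(a,q)}{\varphi(q)}\cJ(\beta)+O\bigl(\varphi(q)\cL^\kappa q^{-1}X^\delta\exp(-c_1\sqrt{\log X})\bigr)+O(\log\log X).}
Since $\varphi(q)q^{-1}\le1$ and $\cL^\kappa$ is a fixed power of $\log X$, the error is $\ll X^\delta\exp(-C\sqrt{\log X})$ after shrinking the constant, while the main term is $v(\beta)=v(\alpha-a/q)$ by the definition in \eqref{defs}; this is the claim. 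The main obstacle is the Piatetski-Shapiro prime number theorem in progressions with Siegel--Walfisz-quality error uniform for $q$ up to a power of $\log X$ --- essentially the argument behind the first half of Lemma~\ref{ST} carrying an extra but harmless Dirichlet character; the two partial summations, the identification of the main term with $v$, and the absorption of the $q$-dependent factors are then routine.
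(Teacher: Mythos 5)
Your proposal is correct and is essentially the paper's own argument: the paper likewise splits into residue classes $b \bmod q$, removes $e(\beta n^k)$ by partial summation (trivially, since $|\beta|X^k\le \cL^\kappa q^{-1}$), quotes its companion paper \cite{cheboshap} for the Piatetski-Shapiro-in-progressions estimate with $\exp(-c_1\sqrt{\log X})$ saving (valid for $1<c<12/11$, uniformly for $q\le\cL^\kappa$), and uses Siegel--Walfisz plus partial integration to produce the main term $\varphi(q)^{-1}S(a,q)\cJ(\beta)=v(\beta)$; your only reorganization is to package the two arithmetic inputs as a single PS prime number theorem in progressions before, rather than after, detecting the Piatetski-Shapiro condition. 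One caveat: your parenthetical claim that the needed character-twisted sums are handled by the $g\equiv 0$ specialisations of Lemmas \ref{ShiftLemma} and \ref{TypeII} in the full range $c<12/11$ is too optimistic (their Weyl/Vinogradov-type exponents with $k\ge 3$ are far weaker than the exponent-pair estimates that give $12/11$), so you should take your own alternative and simply quote the progression result, exactly as the paper does with the first display on p.~323 of \cite{cheboshap}.
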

\begin{proof}
	Combining \eqref{STalpha} with equations \eqref{PsitoPsiStar} and \eqref{BN}, in which we take $H_N = N^{1 - \delta + \eps}$, we see that
	\eqs{
		\Sc - \Tc \ll   \sum_{N=2^l \le X} \( N^{\delta-\eps} + \Theta (N)\).}
	The inner sum in the definition of $\Theta(N)$ can be written as 
	\eqs{\su{1 \le m \le q \\ (m,q)=1} e\(\frac{am^k}{q}\)\su{N < n \le N' \\ n \equiv m \mod q} e(\beta n^k + h n^\delta) \lm (n) + O(q\log N).}
	Removing $\e(\beta n^k)$ by partial summation this double sum is bounded by 
	\eqs{\su{1 \le m \le q\\ (m,q)=1} \( 1 + N^k \cL^\kappa X^{-k} q^{-1} \) \max_{N < N' \le 2N} \bigg| \su{N < n \le N' \\ n \equiv m \mod q} e(h n^\delta) \lm (n) \bigg|. }
	Applying the estimate given as the first equation on page 323 of \cite{cheboshap}, which is uniform both in $m$ and $q$, we derive that
	\bs{
		\Theta(N) 
		&\ll N^{\delta-1}\cL^{\kappa-1} \su{1 \le m \le q\\ (m,q)=1}  q^{-1} 
		\sum_{1 \le |h| \le H_N} \max_{N' \le 2N} \bigg| \su{N < n \le N' \\ n \equiv m \mod q} e(h n^\delta)  \lm (n) \bigg| \\
		& \ll N^{\delta} \exp(-c_1 \sqrt{\log N}) 
	}
	for an absolute constant $c_1>0$, and any fixed $1 < c < 12/11$. 
	
	Next, we deal with $\Tc$. Writing $\beta = \alpha -a/q$
	\eqs{\Tc  = \su{1 \le b \le q \\ (b,q)=1} e(ab^k/q) \su{p \le X \\ p \equiv b \mod q} \delta p^{\delta-1} e(\beta p^k) + O(\omega (q))}
	where $\omega (n)$ is the number of distinct prime divisors of $n$. It follows from Siegel-Walfisz theorem that
	\eqs{\su{p \le x \\ p \equiv b \mod q}  1= \frac{1}{\varphi(q)} \int_{2}^{x}\frac{dt}{\log t} + E(x), }
	uniformly for $q \le \cL^\kappa$, where $E(x) \ll x \exp(-c_2 \sqrt{\log x})$ for an absolute constant $c_2=c_2(\kappa)>0$ and large $x$. By partial integration we derive that
	\begin{multline*}
		\su{p \le X \\ p \equiv b \mod q} \delta p^{\delta-1} e(\beta p^k) = \int_{2^-}^X \frac{\delta x^{\delta-1} e(\beta x^k)}{\varphi(q) \log x} dx \\
		+ O \( X^{\delta-1} E(X)  + \cL^\kappa  \int_{2^-}^X |E(x)| x^{\delta-2} dx \).
	\end{multline*}							
	Using $E(x) \ll x$ when $x$ is small (say $x \le \sqrt X$) and the above bound for large $x$ in the last integral and inserting the result above we obtain  
	\eqs{\Tc = v(\alpha - a/q) + O_\kappa \( X^\delta \exp(-c_3 \sqrt {\log X}) \),}
	for sufficiently large $X$ and some positive absolute constant $c_3 < c_2$. Combining all the estimates above, the result follows. 
\end{proof}

\section{Proof of Theorem \ref{thm1}} 

Recall that, for a fixed $k \ge 3$ and $c>1$, $\Rc \cN$ is the number of representations of a positive integer $\cN$ as in \eqref{representation}. It can be rewritten as 
\eqs{ 
	\Rc \cN = \int_\cU \Sc^s e(-\alpha \cN) d\alpha,
}
where $\cU$ is any interval of unit length and $X = \fl{\cN^{1/k}}$.

\begin{lemma}[Major Arcs] \label{lem:major}
	Assume that $s \ge \max(5,k+1)$, and that  $1<c< \min\{ \frac{12}{11}, \frac{s}{k} \}$. Then, uniformly for integers $m$ with $1 \le m \le \cN$, and $\kappa > \tfrac{4s}{2s-9}$,
	\eqs{
		\int_{\M} \Sc^s  e(-\alpha m) d\alpha = \fS (m) m^{\delta s/k-1} \frac{\Gamma(1+\delta/k)^s}{\Gamma(s\delta/k)  \cL^s } + o\( \frac{\cN^{\delta s/k - 1}}{\cL^s} \).
	}
\end{lemma}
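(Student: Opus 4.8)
The plan is to run the standard major-arc analysis of the circle method; the only genuinely new ingredient is that the \PS weight replaces the classical Waring singular integral by one built from the density $\tfrac{\delta}{k}u^{\delta/k-1}$ on $[0,\cN]$. First I would invoke Lemma~\ref{Sv}: on each arc $\M(a,q)$ write $\alpha=a/q+\beta$ with $|\beta|\le\cL^\kappa X^{-k}/q$, so that $\Sc=v(\beta)+O\bigl(X^\delta e^{-C\sqrt{\log X}}\bigr)$; since $|\Sc|,|v(\beta)|\ll X^\delta$ trivially, raising to the $s$-th power gives $\Sc^s=v(\beta)^s+O\bigl(X^{\delta s}e^{-C\sqrt{\log X}}\bigr)$ uniformly on the arc. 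Summing the contributions of the pairwise disjoint arcs $\M(a,q)$, $q\le\cL^\kappa$, of which there are $\ll\cL^{2\kappa}$, each of measure $\ll\cL^\kappa X^{-k}$, the total error is $\ll X^{\delta s-k}\cL^{2\kappa}e^{-C\sqrt{\log X}}=o\bigl(\cN^{\delta s/k-1}/\cL^s\bigr)$, the exponential beating every power of $\cL$; this is the only place where $c<12/11$ is used (through Lemma~\ref{Sv}). Writing $v(\beta)=\varphi(q)^{-1}S(a,q)\cJ(\beta)$ and $e(-\alpha m)=e(-am/q)e(-\beta m)$, and recognising $\sum_{(a,q)=1}\bigl(\varphi(q)^{-1}S(a,q)\bigr)^s e(-am/q)=S_m(q)$, the major-arc integral reduces to $\sum_{q\le\cL^\kappa}S_m(q)\int_{|\beta|\le\cL^\kappa X^{-k}/q}\cJ(\beta)^s e(-\beta m)\,d\beta$ plus an admissible error.

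Next I would replace $\cJ$ by $\cL^{-1}\cI$. On the range $|\beta|\le\cL^\kappa X^{-k}$, Lemma~\ref{J2I} together with \eqref{I(z)bound} gives $\cJ(\beta),\cL^{-1}\cI(\beta)\ll\cL^{-1}\min\{\cN^{\delta/k},|\beta|^{-\delta/k}\}$ and $\cJ(\beta)-\cL^{-1}\cI(\beta)\ll\cN^{\delta/k}\cL^{-\kappa-2}+\min\{\cN^{\delta/k},|\beta|^{-\delta/k}\}(\log\log\cN)\cL^{-2}$. Bounding $\cJ^s-\cL^{-s}\cI^s\ll\cL^{-(s-1)}\min\{\cdot\}^{s-1}\,|\cJ-\cL^{-1}\cI|$ and integrating — using the routine bound $\int_\R\min\{\cN^{\delta/k},|\beta|^{-\delta/k}\}^s\,d\beta\ll\cN^{\delta s/k-1}$, valid because $c<s/k$ makes $\delta s/k>1$ — and then summing against $\sum_q|S_m(q)|=O(1)$, which converges since $s\ge5$ by $S_m(q)\ll q^{1-s/2+\eps}$ (from \eqref{S(aq)Bound} and $\varphi(q)\gg q^{1-\eps}$), replaces $\cJ^s$ by $\cL^{-s}\cI^s$ at the cost of a further $o\bigl(\cN^{\delta s/k-1}/\cL^s\bigr)$.

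It then remains to complete both the $\beta$-integral and the singular series. For each $q$, extending the integral from $|\beta|\le\cL^\kappa X^{-k}/q$ to all of $\R$ costs $\ll(\cL^\kappa X^{-k}/q)^{1-\delta s/k}$; weighting by $|S_m(q)|$, summing over $q\le\cL^\kappa$ and multiplying by $\cL^{-s}$ keeps this $o\bigl(\cN^{\delta s/k-1}/\cL^s\bigr)$, and likewise $\sum_{q\le\cL^\kappa}S_m(q)=\fS(m)+O\bigl(\cL^{-(s/2-2)\kappa+\eps}\bigr)$. Ensuring that these truncation errors, whose size is governed by the arc width $\cL^\kappa X^{-k}$ and the decay rate of $S_m(q)$, remain below the main term is where a lower bound on $\kappa$ such as $\kappa>\tfrac{4s}{2s-9}$ is needed. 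Finally, the substitution $u=x^k$ shows $\cI(z)=\tfrac{\delta}{k}\int_0^\cN u^{\delta/k-1}e(zu)\,du$, i.e.\ $\cI$ is the Fourier transform of $\tfrac{\delta}{k}u^{\delta/k-1}\mathbf{1}_{[0,\cN]}$; since $0\le m\le\cN$ the upper cutoff is inert in the $s$-fold convolution, and the classical Dirichlet-integral evaluation gives $\int_\R\cI(z)^s e(-zm)\,dz=\bigl(\tfrac{\delta}{k}\bigr)^s m^{s\delta/k-1}\,\Gamma(\delta/k)^s/\Gamma(s\delta/k)=m^{s\delta/k-1}\,\Gamma(1+\delta/k)^s/\Gamma(s\delta/k)$. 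Collecting everything yields $\int_\M\Sc^s e(-\alpha m)\,d\alpha=\cL^{-s}\fS(m)\,m^{s\delta/k-1}\,\Gamma(1+\delta/k)^s/\Gamma(s\delta/k)+o\bigl(\cN^{\delta s/k-1}/\cL^s\bigr)$, which is the claim.

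The hard part is not any single inequality but the uniform bookkeeping of all the error terms. Since $\cI(z)$ decays only like $|z|^{-\delta/k}$ with $\delta/k$ small, $\cI\notin L^1(\R)$ and $\int_\R|\cI|^s$ is only barely convergent; the hypothesis $c<s/k$ (equivalently $s\delta>k$, which in turn forces $s\ge k+1$ for the $c$-range to be non-empty) is exactly what makes $\int_\R|\cI|^s$ finite and the $s$-fold convolution defining the singular integral bounded, hence continuous, so that the Dirichlet evaluation holds pointwise, while $s\ge5$ secures the convergence of $\fS$. Forcing the errors from passing to $\cL^{-1}\cI$, from completing the $\beta$-integral, and from truncating $\fS$ all to be $o\bigl(\cN^{\delta s/k-1}/\cL^s\bigr)$ uniformly in $1\le m\le\cN$ is what fixes the lower bound $\kappa>\tfrac{4s}{2s-9}$; and the Dirichlet evaluation itself is best justified via Fourier inversion (writing $\cI^s$ as the transform of the nonnegative $s$-fold convolution) rather than by a crude interchange with $\int_\R e(z\xi)\,dz$.
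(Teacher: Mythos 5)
Your argument is correct and follows the paper's proof essentially step for step: Lemma~\ref{Sv} on each arc $\M(a,q)$, reduction to $\sum_{q\le\cL^\kappa}S_m(q)\int\cJ(\beta)^s e(-\beta m)\,d\beta$, passage from $\cJ$ to $\cL^{-1}\cI$ via Lemma~\ref{J2I} together with \eqref{I(z)bound} and $S_m(q)\ll q^{1-s/2+\eps}$, then completion of the $\beta$-integral to $\R$ and of the singular series. The only (harmless) deviation is that you evaluate $\int_\R \cI(\beta)^s e(-\beta m)\,d\beta$ directly by the Dirichlet-integral/Fourier-inversion computation, whereas the paper simply cites \cite[Lemma 8]{Waringshap} for this identity.
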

\begin{proof}
	By Lemma \ref{Sv} below, 
	\eqs{\Sc - v(\alpha-a/q) \ll X^\delta E(X),}
	where $E(X)= \exp(-c_2 \sqrt {\log X})$, uniformly for $\alpha \in \M(a,q)$ with $(a,q)=1$ and $1 \le a \le q \le \cL^\kappa$. Put $\beta = \alpha- a/q$. Then, 
	\eqs{\Sc^s - v(\beta)^s \ll (X^\delta E(X))^s + X^\delta E(X) |v(\beta)|^{s-1}. }
	Therefore, 
	\eqs{\sum_{q \le \cL^\kappa} \su{a \le q \\ (a,q)=1}  \int_{\M(a,q)} \bigl( \Sc^s - v(\alpha-a/q)^s  \bigr) e(-\alpha m) d\alpha = o \( X^{\delta s-k} \cL^{-s} \).}
	Furthermore,
	\eqs{\sum_{q \le \cL^\kappa} \su{a \le q \\ (a,q)=1} 
		\mathop{\int}_{\M(a,q)} v(\beta)^s  e(-\alpha m) d\alpha = \sum_{q \le \cL^\kappa} S_m (q) \mathop{\int}_{|q\beta| \le \frac{\cL^\kappa }{X^k}} \cJ (\beta)^s  e(-\beta m) d\beta.}
	Using Lemma \ref{J2I} together with \eqref{I(z)bound} and the bound $S_m (q) \ll q^{1-s/2+\eps}$ (which follows from \eqref{S(aq)Bound}), 
	we see that replacing the integral $\cJ(\beta)$ above by $\cL^{-1}\cI (\beta)$ introduces an error of size $o(X^{\delta s -k} /\cL^s)$.  
	We can then extend the integral over $\beta$ to $\R$ with another permissible error. By \cite[Lemma 8]{Waringshap}, 
	\eqs{
		\int_\R \cI (\beta)^s  e(-\beta m) d\beta = m^{\delta s/k-1} \frac{\Gamma(1+\delta/k)^s}{\Gamma(s \delta/k)}.
	}
	Finally,  using
	\eqs{\sum_{q \le \cL^\kappa} S_m (q) = \fS (m) +  O \( \cL^{\kappa (2-s/2+\eps) } \)
	}
	completes the proof. 
\end{proof}
\begin{lemma}[Minor Arcs] \label{lem:minor}
	Assume that $\lambda>0$, and $t$ is an integer for which \eqref{2t_momentofT1} holds. Then,
	\eqs{\int_\m  |\Sc|^s d\alpha  \ll X^{\delta s - k} \cL^{2t-1- \lambda \delta (s-2t)+\eta} + X^{(s-2t)\theta + 2t -k+\eps},}
	provided that $\kappa \ge 2^{6k}(2+\lambda)$, where $\theta$ is the exponent of $X$ in the error term in Lemma \ref{ST}.
\end{lemma}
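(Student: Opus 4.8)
The plan is to split $\Sc=\Tc+R$ with $R:=\Sc-\Tc$ and bound $\int_\m|\Sc|^s\,d\alpha\ll\int_\m|\Tc|^s\,d\alpha+\int_\m|R|^s\,d\alpha$ (using $(a+b)^s\le 2^{s-1}(a^s+b^s)$), the first integral yielding the term $X^{\delta s-k}\cL^{2t-1-\lambda\delta(s-2t)+\eta}$ and the second yielding $X^{(s-2t)\theta+2t-k+\eps}$. For each integral I would apply the elementary inequality $\int_\m|F|^s\,d\alpha\le\bigl(\sup_{\alpha\in\m}|F|\bigr)^{s-2t}\int_0^1|F|^{2t}\,d\alpha$, but feed it with different ingredients: $\Tc$ needs a genuine pointwise minor-arc bound paired with a \emph{weighted} $2t$-th moment, whereas $R$ — being pointwise much smaller than $\Tc$ — needs only the weak uniform bound of Lemma~\ref{ST} paired with the \emph{crude} $2t$-th moment read off directly from \eqref{2t_momentofT1}.

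For $\int_\m|\Tc|^s$, I would first establish the pointwise estimate $\sup_{\alpha\in\m}|\Tc|\ll X^\delta\cL^{-\lambda\delta}$. This is the classical Waring-Goldbach minor-arc bound for $\sum_{p\le Y}e(\alpha p^k)$ — obtained from Vaughan's identity (Lemma~\ref{VaugID}) together with Weyl's inequality (Lemma~\ref{Weyl}) on the ensuing Type~I and Type~II sums — carried over to the weighted sum $\Tc$ by partial summation against the decreasing weight $\delta y^{\delta-1}$: one splits $\int_2^{X}y^{\delta-2}(\cdots)\,dy$ at $Y_0=X\cL^{-\kappa/(2k)}$, using the trivial bound $|\sum_{p\le y}e(\alpha p^k)|\le\pi(y)\ll y$ below $Y_0$ (tail $\ll Y_0^\delta$) and, above $Y_0$, exploiting that any rational approximation $a'/q'$ of $\alpha$ with $q'\le y^k\cL^{-\kappa/2}$ is forced by $\alpha\in\m$ to have $q'>\cL^\kappa$, so that the minor-arc saving kicks in. The hypothesis $\kappa\ge 2^{6k}(2+\lambda)$ is precisely what makes this saving exceed $\cL^{\lambda\delta}$ with room to spare ($2^{6k}$ dwarfs the reciprocal of the Weyl exponent for degree $k$). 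Secondly, for the moment: decomposing $\Tc=\sum_P\Tc_P$ dyadically into $\ll\cL$ blocks $p\sim P$, we get $|\Tc|^{2t}\ll\cL^{2t-1}\sum_P|\Tc_P|^{2t}$; since $\delta p^{\delta-1}\asymp P^{\delta-1}$ on $p\sim P$ and the number of solutions of $\sum_{i}p_i^k=\sum_j q_j^k$ in primes from $(P,2P]$ is at most the number of integer solutions in $[1,2P]$, \eqref{2t_momentofT1} gives $\int_0^1|\Tc_P|^{2t}\,d\alpha\ll P^{2\delta t-k}\cL^\eta$, whence $\int_0^1|\Tc|^{2t}\,d\alpha\ll X^{2\delta t-k}\cL^{2t-1+\eta}$ (the geometric sum over $P$ is dominated by the top block because $2\delta t>k$ in the range of $c$ relevant to Theorem~\ref{thm1}). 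Plugging both into the elementary inequality gives $\int_\m|\Tc|^s\,d\alpha\ll X^{\delta(s-2t)}\cL^{-\lambda\delta(s-2t)}\cdot X^{2\delta t-k}\cL^{2t-1+\eta}=X^{\delta s-k}\cL^{2t-1-\lambda\delta(s-2t)+\eta}$.

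For $\int_\m|R|^s$, the gain comes from $R$ being pointwise of size only $X^{\theta+\eps}$. By \eqref{STalpha}, $R=\sum_{p\le X}e(\alpha p^k)\Delta\psi(p)+O(\log\log X)$; using $|\Delta\psi(n)|\le 1$, expanding the $2t$-th power, and again comparing prime solution counts with integer solution counts, \eqref{2t_momentofT1} yields $\int_0^1|R|^{2t}\,d\alpha\ll\int_0^1\bigl|\sum_{n\le X}e(\alpha n^k)\bigr|^{2t}\,d\alpha+(\log\log X)^{2t}\ll X^{2t-k}\cL^\eta$. Since Lemma~\ref{ST} gives $\sup_{\alpha\in\R}|R|\ll X^{\theta+\eps}$, the elementary inequality produces $\int_\m|R|^s\,d\alpha\le\bigl(\sup_{\alpha\in\m}|R|\bigr)^{s-2t}\int_0^1|R|^{2t}\,d\alpha\ll X^{(s-2t)(\theta+\eps)}\cdot X^{2t-k}\cL^\eta\ll X^{(s-2t)\theta+2t-k+\eps}$. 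Adding the two contributions finishes the proof.

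I expect the only real difficulty to be the pointwise minor-arc estimate for $\Tc$ used above: one has to run the Vaughan-Weyl analysis for $\sum_{p\le Y}e(\alpha p^k)$ \emph{uniformly} in $Y\le X$, keep explicit track of how the logarithmic saving scales with $\kappa$ so that it survives the $(s-2t)$-th power and still beats the $\cL^{2t-1+\eta}$ loss from the moment, and mesh this with the partial-summation treatment of the Piatetski-Shapiro weight — choosing $Y_0$ so that both the trivial tail $Y_0^\delta$ and the genuine minor-arc part come out $\ll X^\delta\cL^{-\lambda\delta}$. Everything else is bookkeeping: the two moment estimates fall out of \eqref{2t_momentofT1} once one notices that prime solution counts — and, via $\delta p^{\delta-1}\le 1$ and $|\Delta\psi|\le 1$, the relevant weighted counts — are bounded by unrestricted integer solution counts.
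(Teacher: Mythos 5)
Your overall architecture is exactly the paper's: split $\Sc=\Tc+(\Sc-\Tc)$, bound each piece by $\bigl(\sup_{\alpha\in\m}|\cdot|\bigr)^{s-2t}$ times a $2t$-th moment, control the moments by comparing (weighted) prime solution counts with integer solution counts so that \eqref{2t_momentofT1} applies after a dyadic decomposition, and bound $\sup_{\alpha\in\m}|\Sc-\Tc|\ll X^{\theta+\eps}$ by Lemma \ref{ST}. All of that is fine and matches the paper's proof step for step.

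The genuine gap is in your treatment of the one step you yourself flag as the difficulty: the pointwise bound $\sup_{\alpha\in\m}|\Tc|\ll X^{\delta}\cL^{-\lambda\delta}$. You propose to obtain the underlying estimate for $\sum_{p\le y}e(\alpha p^k)$ from Vaughan's identity (Lemma \ref{VaugID}) combined with Weyl's inequality (Lemma \ref{Weyl}) on the resulting Type I and Type II sums. This cannot work as stated: on the minor arcs the only information is $q>\cL^{\kappa}$, so the saving you can hope for is a power of $\log X$, while every Weyl-type bound you have access to (Lemma \ref{Weyl}, or the classical Weyl inequality) carries a factor $N^{1+\eps}$ — and for $k\ge3$ even a careful Weyl differencing produces divisor-function losses of size $\exp\bigl(c\log N/\log\log N\bigr)$ — which swallow \emph{any} power of $\log$. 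Consequently the claim that ``$2^{6k}$ dwarfs the reciprocal of the Weyl exponent'' does not rescue the argument: the issue is not the size of the exponent but the $N^{\eps}$-type loss itself. What is needed is a Vinogradov--Hua type estimate in which the losses are divisor functions of $q$ only (harmless, since $q\le\cL^{-\kappa}X^{k}$ and the lower range is $q>\cL^{\kappa}$), and this is precisely what the paper imports: after Dirichlet approximation, partial summation to remove $e(\beta p^k)$, and Lemma \ref{enlargeint} to pass to $\sum_{p\le X}e(ap^k/q+\gamma p)$, it quotes \cite[Theorem 10]{Hua}, which gives $\ll X\cL^{-\lambda-1}$ exactly under the hypothesis $\kappa\ge2^{6k}(2+\lambda)$ — this is where that hypothesis enters. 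So either cite that theorem (as the paper does) or supply the full Vinogradov-style argument with complete rational sums modulo $q$; ``Vaughan plus Weyl'' is not enough. Your surrounding bookkeeping (splitting the partial summation at $Y_0$, noting that any approximation with $q'\le y^{k}\cL^{-\kappa/2}$ still has $q'>\cL^{\kappa}$, and the remark that $2t\delta>k$ so the top dyadic block dominates) is correct and consistent with the paper.
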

\begin{proof}
	Using equation \eqref{STalpha} we obtain
	\eqs{\int_\m |\Sc|^s d\alpha \ll I_1 + I_2 + O\( (\log \log X)^s \)}
	where 
	\begin{equation*}
		I_1 = \int_\m |\Tc|^s d\alpha, \qquad I_2 = \int_\m \bigg| \sum_{p \le X} e(\alpha p^k) \Delta \psi (p) \bigg|^s.
	\end{equation*}
	We first bound $I_1$. Let $ 2< J \le X$ be a constant to be determined. By partial integration 
	\eqs{\Tc \ll J^\delta + J^{\delta-1} \sup_{J< t \le X} \bigg| \su{J < p \le t} e(\alpha p^k) \bigg|.}
	Take $\alpha \in \m$. By Dirichlet's approximation theorem, one can find integers $a,~q$ with $1 \le a \le q \le \cL^{-\kappa} X^k$ such that $|\alpha - a/q | \le q^{-1} \cL^\kappa X^{-k}$. Since $\alpha \in \m$, we have $q > \cL^\kappa$. Writing $\alpha = a/q + \beta$, and using partial integration we obtain
	\eqs{\su{J < p \le t} e(\alpha p^k) \ll \sup_{J < y \le t} \bigg| \su{J < p \le y} e(ap^k/q)   \bigg| \( 1 + |\beta| t^k \).}
	Following the proof of Lemma \ref{enlargeint} and recalling that $y \le t \le X$,
	\eqs{\su{J < p \le y} e(ap^k/q) \ll \log X \sup_{\gamma \in [0,1)} \bigg| \sum_{p \le X} e(ap^k/q + \gamma p) \bigg|,}
	so that
	\eqs{\Tc\ll J^{\delta} + J^{\delta-1} \sup_{\gamma \in [0,1)} \bigg| \sum_{p \le X} e(ap^k/q + \gamma p) \bigg| \log X.
	}
	By \cite[Theorem 10]{Hua} it follows for arbitrary $\lambda >0$ and any $\gamma \in \R$ that whenever $\kappa \ge 2^{6k}(2+\lambda)$,
	\eqs{\su{p \le X} e(a p^k/q +\gamma p) \ll X \cL^{-\lambda-1}.}
	Choosing $J = X \cL^{-\lambda}$ we conclude that
	\eqs{\Tc \ll X^\delta \cL^{-\lambda\delta}.}
	Using this bound together with H\"older's inequality yields
	\bs{I_1 &\le \sup_{\alpha \in \m} |\Tc|^{s-2t} \int_\m \bigg|\sum_{2^l = N \le X} \su{p\sim N} \delta p^{\delta-1} e(\alpha p^k) \bigg|^{2t} d\alpha \\
		& \ll (X^\delta \cL^{-\lambda\delta})^{(s-2t)} \cL^{2t-1} \sum_{N \le X} \int_0^1 \bigg| \su{p\sim N} \delta p^{\delta-1} e(\alpha p^k) \bigg|^{2t} d\alpha.  
	}
	By considering the underlying Diophantine equations we see that the last integral is 
	\eqs{\ll N^{2t(\delta-1)} \int_0^1 \bigg| \sum_{n \le N} e(\alpha n^k) \bigg|^{2t} d\alpha. 
	}
	Using \eqref{2t_momentofT1} we conclude that for some $\eta \ge0$,
	\eqn{\label{minorI1}
		I_1 \ll X^{\delta s - k} \cL^{2t-1- \lambda \delta (s-2t)+\eta}.}
	
	Next, we deal with $I_2$. Note that
	\eqs{I_2 \ll \sup_{\alpha \in \m} \bigg| \sum_{p \le X} e(\alpha p^k) \Delta \psi (p) \bigg|^{s-2t} \int_0^1 \bigg| \sum_{n \le X} e(\alpha n^k) \bigg|^{2t} d\alpha.}
	Using \eqref{STalpha} and then applying Lemma \ref{ST} together with \eqref{2t_momentofT1} we obtain
	\eqn{\label{minorI2}
		I_2 \ll X^{(s-2t) \theta + 2t -k+\eps}.}
	Combining \eqref{minorI1} and \eqref{minorI2}, the proof is completed. 
\end{proof}




The proof of Theorem \ref{thm1} can now be completed by taking $m = \cN$ in Lemma \ref{lem:major} and observing that taking $\lambda$ (and thus $\kappa$) sufficiently large in Lemma \ref{lem:minor} ensures that the contribution from minor arcs is $o(X^{\delta s - k} \cL^{-s})$ under the additional assumption in \eqref{rangeofc}. \medskip

\noindent{\bf Acknowledgments.} We would like to thank Professor T. Wooley for reading this manuscript. We would also like to thank the referee for carefully reading this paper and his/her useful comments on the organization of the paper that we believe made it more readable.


\begin{thebibliography}{xx}
	\bibitem{Waringshap} 
	{\sc Y.~Akbal; A.~M.~G\"ulo\u{g}lu}, \textit{Waring's Problem with Piatetski Shapiro Numbers}.  Mathematika \textbf{62} (2016), 524--550.
	
	\bibitem{cheboshap} 
		{\sc Y.~Akbal; A.~M.~G\"ulo\u{g}lu}, \textit{Piatetski-Shapiro meets Chebotarev}. Acta Arith. 167 (2015), no. \textbf{4}, 301--325. 
	
	\bibitem{Bourgain} 
	{\sc J.~Bourgain}, \textit{On the Vinogradov mean value}. arXiv:1601.08173 [math.NT].
	
	\bibitem{BoDeGu} 
 {\sc J.~Bourgain; C.~Demeter; L.~Guth}, \textit{Proof of the main conjecture in Vinogradov's mean value theorem for degrees higher than three}. Ann. of Math. (2) 184 (2016), no. \textbf{2}, 633-–682. 
	
	\bibitem{GraKol}
 {\sc	S.~W.~Graham; G.~Kolesnik}, \textit{van der Corput's method of exponential sums}. London Mathematical Society Lecture Note Series, 126. Cambridge University Press, Cambridge, 1991. vi+120 pp. ISBN: 0-521-33927-8
	
	\bibitem{Heathbasgan} 
 {\sc D.~R.~Heath-Brown}, \textit{A New k-th Derivative Estimate for Exponential Sums via Vinogradov's Mean Value Theorem}.  arXiv:1601.04493v3 [math.NT].
	
	
	\bibitem{Hua} 
 {\sc L.~K.~Hua}, \textit{Additive theory of prime numbers} Translations of Mathematical Monographs. Vol. 13, American Mathematical Society, Providence, R.I. 1965 xiii+190 pp. 
	
	\bibitem{Iwaniec}
 {\sc H.~Iwaniec; E.~Kowalski}, \textit{Analytic number theory}. American Mathematical Society Colloquium Publications, 53. American Mathematical Society, Providence, RI, 2004. xii+615 pp. ISBN: 0-8218-3633-1 
	
	\bibitem{Kumchev}
 {\sc A.~Kumchev}, \textit{On the Piatetski-Shapiro-Vinogradov theorem}. J. Th\'eor. Nombres Bordeaux 9 (1997), no. \textbf{1}, 11--23. 
	
	\bibitem{KumWoo1}
 {\sc A.~V.~Kumchev; T.~D.~Wooley}, \textit{On the Waring–Goldbach problem for eighth and higher powers}. J. Lond. Math. Soc. (2) 93 (2016), no. \textbf{3}, 811–-824.
	
	\bibitem{KumWoo2}
 {\sc A.~V.~Kumchev; T.~D.~Wooley}, \textit{On the Waring-Goldbach problem for seventh and higher powers}. Monatsh. Math. (in press), 8pp; arxiv:1602.08592.
	
	\bibitem{Shapiro}
 {\sc I.~I.~Piatetski-Shapiro}, \textit{On the distribution of prime numbers in sequences of the form $[f(n)]$}. (Russian) Mat. Sbornik N.S. 33 (\textbf{75}), (1953). 559--566.
	
	\bibitem{Vaughan}
 {\sc R.~C.~Vaughan}, \textit{The Hardy-Littlewood method}. Second edition. Cambridge Tracts in Mathematics, 125. Cambridge University Press, Cambridge, 1997. xiv+232 pp. ISBN: 0-521-57347-5 
	
	\bibitem{Wooley} 
 {\sc T.~D.~Wooley}, \textit{The asymptotic formula in Waring’s problem}. Int. Math. Res. Not., (2012), no. \textbf{7}, 1485--1504.
	
	\bibitem{ZhaZai}
 {\sc D.~Y.~Zhang; W.~G.~Zhai}. \textit{The Waring-Goldbach problem in thin sets of primes. II}. (Chinese) Acta Math. Sinica (Chin. Ser.) 48 (2005), no. \textbf{4}, 809-–816.
	

\end{thebibliography}
\end{document}